\newtheorem{theorem}{Theorem}[section]
\newtheorem{lemma}[theorem]{Lemma}
\newtheorem{corollary}[theorem]{Corollary}
\theoremstyle{definition}
\theoremstyle{definition}
\newtheorem{ex}{Example}[section]
\theoremstyle{remark}
\newtheorem{rem}{Remark}[section]
\numberwithin{equation}{section}
\journal{XXX}
\begin{document}

\begin{frontmatter}
\allowdisplaybreaks


\title{Perturbation estimation for the parallel sum of Hermitian positive semi-definite matrices}

\author[rvt]{Wei Luo}
\ead{luoweipig1@163.com}
\author[rvt]{Chuanning Song}
\ead{songning@shnu.edu.cn}
\author[rvt]{Qingxiang Xu\corref{cor1}\fnref{fn1}}
\ead{qxxu@shnu.edu.cn,qingxiang\_xu@126.com}
\cortext[cor1]{Corresponding author}
\fntext[fn1]{Supported by the
National Natural Science Foundation of China (11671261).}
\address[rvt]{Department of Mathematics, Shanghai Normal University, Shanghai 200234, PR China}

\begin{abstract}Let $\mathbb{C}^{n\times n}$ be the set of all $n \times n$ complex matrices. For any Hermitian positive semi-definite matrices $A$ and $B$ in $\mathbb{C}^{n\times n}$, their new common upper bound less than $A+B-A:B$  is constructed, where $(A+B)^\dag$ denotes the Moore-Penrose inverse of $A+B$, and $A:B=A(A+B)^\dag B$ is the parallel sum of $A$ and $B$. A
factorization formula for $(A+X):(B+Y)-A:B-X:Y$ is derived, where $X,Y\in\mathbb{C}^{n\times n}$ are any Hermitian positive semi-definite perturbations of $A$ and $B$, respectively.  Based on the derived factorization formula  and the constructed common upper bound of $X$ and $Y$, some new and sharp norm upper bounds of $(A+X):(B+Y)-A:B$ are provided. Numerical examples are also provided to illustrate the sharpness of the obtained norm upper bounds.
\end{abstract}

\begin{keyword}
Moore-Penrose inverse; parallel sum; perturbation estimation; norm upper bound
\MSC 15A09, 15A60, 46L05, 47A55

\end{keyword}

\end{frontmatter}



\section{Introduction and preliminaries}
Throughout this paper,  $\mathbb{C}^{m\times n}$ is the set of all $m \times n$ complex matrices. For any $A\in \mathbb{C}^{m\times n}$, let ${\cal R}(A)$, $A^*$ and $\Vert A \Vert$ denote the range, the conjugate transpose and the 2-norm of $A$, respectively. Let $B,C\in\mathbb{C}^{n\times n}$ be Hermitian. The notation $B\ge C$ is used to indicate that $B-C$ is positive semi-definite.

Recall that the Moore-Penrose inverse of $A\in \mathbb{C}^{m\times n}$ is the unique element $A^\dag\in \mathbb{C}^{n\times m}$ which satisfies
$$AA^\dag A=A,\ A^\dag A A^\dag=A^\dag,\ (AA^\dag)^*=AA^\dag\ \mbox{and}\ (A^\dag A)^*=A^\dag A.$$
It is known (see e.g.\cite{wang-wei-qiao}) that $\mathcal{R}(A^\dag)=\mathcal{R}(A^*)$, $(A^*)^\dag=(A^\dag)^*$ and
$(AA^*)^\dag=(A^*)^\dag A^\dag$.
So if $A$ is Hermitian, then $(A^\dag)^*=A^\dag$ and $AA^\dag=A^\dag A$.

One application of the Moore-Penrose inverse is the study of the parallel sum introduced by Anderson and Duffin in \cite{Anderson-Duffin} for Hermitian positive semi-definite matrices. Let $A,B\in\mathbb{C}^{n\times n}$ be Hermitian positive semi-definite. The parallel sum of $A$ and $B$ is defined by \begin{equation}\label{equ:notation of parallel sum}A:B=A(A+B)^\dag B,\end{equation}
which is so named because of its origin in and application to the electrical network theory that
$$\big(r_1^{-1}+r_2^{-1}\big)^{-1}=r_1(r_1+r_2)^{-1}r_2$$ is the resistance arising  from resistors $r_1$ and $r_2$ in parallel.
It is proved in \cite[Lemmas~2 and 4]{Anderson-Duffin} that $A:B\ge 0$  and a norm upper bound of $A:B$ is given in
\cite[Theorem~25]{Anderson-Duffin} as
\begin{equation}\label{equ:sharp upper bound for parallel sum wrt positive operators}\Vert A:B\Vert\le \frac{\Vert A\Vert\cdot\Vert B\Vert}{\Vert A\Vert+\Vert B\Vert}.\end{equation}
The perturbation estimation for the parallel sum is also considered in \cite{Anderson-Duffin}. More precisely, let $A,B, X, Y\in\mathbb{C}^{n\times n}$ be all Hermitian positive semi-definite and let $E$ be the error induced by the perturbation of $A:B$ as
 \begin{equation}\label{eqn:defn of E}E=(A+ X):(B+ Y)-A:B.\end{equation} It can be deduced from \cite[Corollary~21]{Anderson-Duffin} that $E\ge 0$  and a norm upper bound of $E$ is established in \cite[Theorem~31]{Anderson-Duffin} as
\begin{equation}\label{equ:Anderson-Duffin's upper bound}\Vert E\Vert \le \lambda_{A,B}\,\Vert X+Y\Vert,\end{equation}
where
\begin{equation}\label{equ:defn of lambda A B}\lambda_{A,B}=2\Vert (A+B)^\dag A\Vert^2+2\Vert (A+B)^\dag B\Vert^2+\frac12.\end{equation}

Ever since the publication of \cite{Anderson-Duffin}, the parallel sum has been studied in the more general settings of non-square matrices under certain conditions of range inclusions \cite{Mitra-Odell}, of positive operators $A$ and $B$ on a Hilbert space such that the range of $A+B$ is closed \cite{Anderson-Schreiber} and furthermore,  without any assumptions on the range of $A+B$ \cite{Fillmore-Williams,Morley}. As the generalizations of the parallel sum,  shorted operators and the weakly parallel sum are also studied in \cite{Anderson,Anderson-Trapp,Butler-Morley,Mitra,Mitra-Puntanen} and \cite{Antezana-Corach-Stojanoff,Djikic}, respectively. For many different equivalent definitions and the properties of the parallel sum, see a recent review paper \cite{Berkics} and the references therein.

Although much progress has been made in the study of the parallel sum and its various generalizations, very little has been done on the improvement of norm upper bound \eqref{equ:Anderson-Duffin's upper bound}, which is the concern of this paper. Let $X,Y\in\mathbb{C}^{n\times n}$ be Hermitian positive semi-definite. Checking the proof of \cite[Theorem~31]{Anderson-Duffin} carefully, we find that
the norm $\Vert X+Y\Vert$ appearing in \eqref{equ:Anderson-Duffin's upper bound} can in fact be replaced by any $\Vert Z\Vert$, where $Z$ is any common upper bound of $X$ and $Y$. The less is  $Z$, the sharper is the resulting upper bound \eqref{equ:Anderson-Duffin's upper bound}. This leads us to investigate small common upper bounds of $X$ and $Y$. One choice less than $X+Y$ is the matrix $C_{X,Y}$ defined by
\begin{equation}\label{equ:defn of C X Y}C_{X,Y}=X+Y-X:Y,\end{equation}
which is a common upper bound of $X$ and $Y$ since by \eqref{equ:two sides A or two sides B}, $C_{X,Y}-X=Y-X:Y=Y(X+Y)^\dag Y\ge 0$ and $C_{X,Y}-Y\ge 0$ in a similar way.
It is of independent interest to find out a common upper bound of $X$ and $Y$, which is even less than $C_{X,Y}$. By using certain $C^*$-algebraic technique, we have managed to figure out such a common upper bound $X\vee Y$; see Theorem~\ref{thm:Almost least common upper bound} for the details.

Another way to improve the upper bound \eqref{equ:Anderson-Duffin's upper bound} is the reduction of the coefficient $\lambda_{A,B}$ given by  \eqref{equ:defn of lambda A B}, where $A,B,X,Y\in \mathbb{C}^{n\times n}$ are all Hermitian positive semi-definite.   Let $T$ and $H$ be defined by
\begin{equation}\label{equ:defn of H}T=(A+B):(X+Y),\ H=(A+X):(B+Y)-A:B-X:Y.\end{equation}
Along the line of checking $\langle Hx,x\rangle\ge 0\ \mbox{for any $x\in\mathbb{C}^n$}$, it is proved in \cite[Lemmas~18 and 20]{Anderson-Duffin} that the matrix $H$ defined by \eqref{equ:defn of H} is also Hermitian positive semi-definite. An interpretation of such a result with electronic circuits is as follows:
\\
{\center
\begin{tikzpicture}

\coordinate (P0) at (0,0);
\coordinate (P1) at (1,0);
\coordinate (P2) at (1,1);
\coordinate (P3) at (1,-1);
\coordinate (P4) at (4,1);
\coordinate (P5) at (4,-1);
\coordinate (P6) at (4,0);
\coordinate (P7) at (5,0);

\tikzstyle{every node}=[draw, shape=rectangle];
\node (A) at (2,1) {$A$};
\node (B) at (2,-1) {$B$};
\node (X) at (3,1) {$X$};
\node (Y) at (3,-1) {$Y$};

\draw (P0)--(P1) (P1)--(P2) (P1)--(P3)
(P2)--(A) (A)--(X) (X)--(P4) (P4)--(P6) (P6)--(P7) (P3)--(B)
(B)--(Y) (Y)--(P5) (P5)--(P6)   ;
\end{tikzpicture}
\,\,\,\,
\begin{tikzpicture}
\coordinate (P0) at (0,0);
\coordinate (P1) at (1,0);
\coordinate (P2) at (1,1);
\coordinate (P3) at (1,-1);
\coordinate (P4) at (3,1);
\coordinate (P5) at (3,-1);
\coordinate (P6) at (3,0);
\coordinate (P7) at (4,0);
\coordinate (P8) at (4,1);
\coordinate (P9) at (4,-1);
\coordinate (P10) at (6,1);
\coordinate (P11) at (6,-1);
\coordinate (P12) at (6,0);
\coordinate (P13) at (7,0);

\tikzstyle{every node}=[draw, shape=rectangle];
\node (A) at (2,1) {$A$};
\node (B) at (2,-1) {$B$};
\node (X) at (5,1) {$X$};
\node (Y) at (5,-1) {$Y$};

\draw (P0)--(P1) (P1)--(P2) (P1)--(P3)
(P2)--(A) (A)--(P4) (P4)--(P6) (P3)--(B) (B)--(P5) (P5)--(P6) (P6)--(P7) (P7)--(P8) (P7)--(P9) (P8)--(X) (X)--(P10) (P10)--(P12)
(P9)--(Y) (Y)--(P11) (P11)--(P12) (P12)--(P13)  ;
\end{tikzpicture}
}
\\
The key point of this paper is,
a factorization formula for $H$ can be derived as \eqref{equ:key equality concerning perturbation of parallel sum}, which leads obviously to
the positivity of $H$ since by Lemma~\ref{lem:two sides A or two sides B}, the matrix $T$ defined by \eqref{equ:defn of H} is positive.

The parallel sum and its generalizations have proved to be useful operations in a wide variety of
fields, such as electrical networks \cite{Anderson-Duffin,Andersont-Morley-Trapp,Mitra}, statistics \cite{Mitra-Puntanen,Ouellette},
control theory \cite{{Andersont-Morley-Trapp-2},Green-Kamen}, geodetic adjustments \cite{Zhong-Welsch}, image denoising
problems \cite{Bot-Hendrich}, signal recovery \cite{Becker-Combettes}, numerical calculations \cite{Berlinet} and so on.  In view of the observational error or measuring error, it is meaningful to study the perturbation estimation of the parallel sum.

Formula \eqref{equ:key equality concerning perturbation of parallel sum} plays a crucial role in our study of the  perturbation estimation for the parallel sum. It is firstly applied  to study the one-sided perturbation (\ref{equ:defn of G-one side}), and is then applied to deal with the special case of the two-sided perturbation \eqref{eqn:defn of F two dides X}, where a norm upper bound (\ref{eqn:1st sharp of norm F}), as well as its simplified version (\ref{eqn:2nd sharp of norm F}), is obtained. The general two-sided perturbation \eqref{eqn:defn of E} is concerned in Theorem~\ref{thm:norm upper bounds with parameters}, where two norm upper bounds with parameters are derived, and one of which turns out to be the infimum of a function $f(t)$ defined on $(0,+\infty)$ as \eqref{equ:defn of f}. As shown by Example~\ref{ex:parameter to be selected}, this
infimum is easy to handle since the parameter $t$ can be chosen by using certain Matlab commands directly. The sharpness of the newly obtained upper bounds are illustrated by   Remark~\ref{rem:comparison of upper bound wrt Z} and two numerical examples in Section~\ref{sec:Numerical examples}.

The rest of this paper is organized as follows. In Section~\ref{sec:new common upper bound}, a new kind of common upper bound of two Hermitian positive semi-definite matrices   is constructed. In Section~\ref{sec:perturbation analysis}, the perturbation estimation for the parallel sum of Hermitian positive semi-definite matrices  is carried out. In Section~\ref{sec:Numerical examples}, two numerical examples  are  provided.

\section{A new kind of common upper bound of two Hermitian positive semi-definite matrices}\label{sec:new common upper bound}
The purpose of this section is to construct a new kind of common upper bound of two Hermitian positive semi-definite matrices. We begin with two auxiliary lemmas, whose proofs are direct.
\begin{lemma}\label{lem:some trivial propositions of M-P inverse} For any $A\in\mathbb{C}^{m\times n}$, it holds that ${\cal R}(AA^*)={\cal R}(A)$. If in addition $m=n$ and $A\ge 0$, then ${\cal R}(A^\frac12)={\cal R}(A)$, $A^\dag\ge 0$ and $(A^\dag)^\frac12=(A^\frac12)^\dag$.
\end{lemma}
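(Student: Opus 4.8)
The plan is to dispatch the four assertions in turn, using only elementary range/kernel identities together with the Moore--Penrose facts recalled in the introduction.

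\emph{Proof of ${\cal R}(AA^*)={\cal R}(A)$.} The inclusion ${\cal R}(AA^*)\subseteq{\cal R}(A)$ is clear from $AA^*x=A(A^*x)$. For the reverse inclusion I would first check $\ker(AA^*)=\ker(A^*)$: if $AA^*x=0$, then $\Vert A^*x\Vert^2=\langle AA^*x,x\rangle=0$, so $A^*x=0$, and the converse is trivial. Since $AA^*$ is Hermitian, taking orthogonal complements gives
\[
{\cal R}(AA^*)=\ker\big((AA^*)^*\big)^\perp=\ker(AA^*)^\perp=\ker(A^*)^\perp={\cal R}(A).
\]

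\emph{Proof of ${\cal R}(A^{1/2})={\cal R}(A)$ when $m=n$ and $A\ge 0$.} Here $A^{1/2}$ is Hermitian and $A^{1/2}\big(A^{1/2}\big)^*=A$, so applying the already-proved identity with $A^{1/2}$ in place of $A$ yields ${\cal R}(A)={\cal R}\big(A^{1/2}(A^{1/2})^*\big)={\cal R}(A^{1/2})$.

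\emph{Proof of $A^\dag\ge 0$ and $(A^\dag)^{1/2}=(A^{1/2})^\dag$.} I would invoke the spectral decomposition $A=U\,\mathrm{diag}(\lambda_1,\dots,\lambda_n)\,U^*$ with $U$ unitary and each $\lambda_i\ge 0$. Then $A^{1/2}$, $A^\dag$, $(A^\dag)^{1/2}$ and $(A^{1/2})^\dag$ are all obtained from $A$ by replacing each $\lambda_i$ by $\lambda_i^{1/2}$, $\lambda_i^\dag$, $(\lambda_i^\dag)^{1/2}$ and $(\lambda_i^{1/2})^\dag$ respectively, where $t^\dag=t^{-1}$ for $t>0$ and $0^\dag=0$. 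Since $\lambda_i^\dag\ge 0$, the first claim follows; and since $(\lambda_i^\dag)^{1/2}=(\lambda_i^{1/2})^\dag$ for every $i$, the second follows as well. Alternatively, one can avoid the spectral theorem: the identity $(CC^*)^\dag=(C^*)^\dag C^\dag$ from the introduction, applied to the Hermitian matrix $C=A^{1/2}$, gives $A^\dag=\big[(A^{1/2})^\dag\big]^2$ with $(A^{1/2})^\dag$ Hermitian; once one knows $(A^{1/2})^\dag\ge 0$, both conclusions follow from the uniqueness of the positive semi-definite square root.

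The computations are routine; the only point demanding care is the handling of Moore--Penrose inverses of products --- the formula $(BC)^\dag=C^\dag B^\dag$ is false in general --- which is why I rely either on the special identity $(CC^*)^\dag=(C^*)^\dag C^\dag$ or, more transparently, on the spectral decomposition, where positivity of $A^\dag$ and the square-root identity become immediate scalar facts.
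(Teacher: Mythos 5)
Your proof is correct. The paper gives no argument for this lemma at all --- it is introduced with the remark that the proofs of the two auxiliary lemmas ``are direct'' --- so there is no authorial proof to compare against; your route (kernel/orthogonal-complement reasoning for ${\cal R}(AA^*)={\cal R}(A)$, reduction of ${\cal R}(A^{1/2})={\cal R}(A)$ to that identity via $A^{1/2}(A^{1/2})^*=A$, and the spectral decomposition for $A^\dag\ge 0$ and $(A^\dag)^{1/2}=(A^{1/2})^\dag$) is the standard way to supply the missing details, and every step checks out. One minor caution: your ``alternative'' argument for the last part still presupposes $(A^{1/2})^\dag\ge 0$, which is a claim of exactly the same kind as $A^\dag\ge 0$, so the spectral-decomposition version is the one that actually closes the argument.
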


\begin{lemma}\label{lem:large positive moore-penrose invertible implies large range} Let $A,B\in\mathbb{C}^{n\times n}$ be such that $0\le A\le B$. Then ${\cal R}(A)\subseteq {\cal R}(B)$ and $\Vert A\Vert\le \Vert B\Vert$.
 \end{lemma}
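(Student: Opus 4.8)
The plan is to reduce both assertions to elementary properties of the quadratic forms $x\mapsto\langle Ax,x\rangle$ and $x\mapsto\langle Bx,x\rangle$, using that the hypothesis $0\le A\le B$ means precisely that $0\le\langle Ax,x\rangle\le\langle Bx,x\rangle$ for every $x\in\mathbb{C}^n$.

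For the norm inequality, I would first recall the standard fact that a Hermitian positive semi-definite matrix $M$ satisfies $\Vert M\Vert=\max_{\Vert x\Vert=1}\langle Mx,x\rangle$ (the $2$-norm equals the largest eigenvalue, which for Hermitian $M$ coincides with the numerical radius). Applying this to $A$ and to $B$, and using $\langle Ax,x\rangle\le\langle Bx,x\rangle$ pointwise, taking the supremum over the unit sphere yields $\Vert A\Vert\le\Vert B\Vert$ immediately.

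For the range inclusion, the key step is to prove $\ker B\subseteq\ker A$ and then pass to orthogonal complements. If $Bx=0$, then $\langle Bx,x\rangle=0$, so from $0\le\langle Ax,x\rangle\le\langle Bx,x\rangle$ we get $\langle Ax,x\rangle=0$. Writing $A=A^{\frac12}A^{\frac12}$ with $A^{\frac12}\ge 0$, this says $\Vert A^{\frac12}x\Vert^2=0$, hence $A^{\frac12}x=0$ and therefore $Ax=A^{\frac12}\big(A^{\frac12}x\big)=0$; thus $\ker B\subseteq\ker A$. Since $A$ and $B$ are Hermitian, $\mathcal{R}(A)=\mathcal{R}(A^*)=(\ker A)^\perp$ and likewise for $B$, whence $\mathcal{R}(A)=(\ker A)^\perp\subseteq(\ker B)^\perp=\mathcal{R}(B)$. (Alternatively, one could invoke Lemma~\ref{lem:some trivial propositions of M-P inverse} to write $\mathcal{R}(A)=\mathcal{R}(A^{\frac12})$ and argue with $A^{\frac12}$ directly, but the kernel route is shortest.)

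I do not expect a genuine obstacle here: the argument is routine once the two elementary facts — the spectral characterisation $\Vert M\Vert=\max_{\Vert x\Vert=1}\langle Mx,x\rangle$ for $M\ge 0$, and $\mathcal{R}(M)=(\ker M)^\perp$ for Hermitian $M$ — are assembled. The only point requiring a little care is the implication $\langle Ax,x\rangle=0\Rightarrow Ax=0$, which truly uses the positivity of $A$ (via its square root, or equivalently the Cauchy–Schwarz inequality for the semi-inner product $(u,v)\mapsto\langle Au,v\rangle$) and would fail for indefinite $A$.
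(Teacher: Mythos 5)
Your proof is correct, and it is precisely the routine argument the paper has in mind: the lemma is stated without proof (the authors merely remark that the proofs of the two auxiliary lemmas ``are direct''). Both steps — the Rayleigh-quotient characterisation $\Vert M\Vert=\max_{\Vert x\Vert=1}\langle Mx,x\rangle$ for $M\ge 0$, and the kernel inclusion $\ker B\subseteq\ker A$ combined with $\mathcal{R}(M)=(\ker M)^\perp$ for Hermitian $M$ — are sound, and you correctly flag the one point needing care, namely that $\langle Ax,x\rangle=0$ forces $Ax=0$ only because $A\ge 0$.
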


Some basic properties of the parallel sum of Hermitian positive semi-definite matrices are derived in \cite{Anderson-Duffin}, part of which are as follows:
\begin{lemma}\label{lem:two sides A or two sides B}{\rm (cf.\,\cite[Lemmas~1--4]{Anderson-Duffin} and Lemma~\ref{lem:large positive moore-penrose invertible implies large range})}\ Let $A,B\in \mathbb{C}^{n\times n}$ be Hermitian positive semi-definite. Then $A:B\ge 0$, $\mathcal{R}(A:B)=\mathcal{R}(A)\cap \mathcal{R}(B)$, and  \begin{equation}\label{equ:two sides A or two sides B}A:B=B:A=A-A(A+B)^\dag A=B-B(A+B)^\dag B.\end{equation}
\end{lemma}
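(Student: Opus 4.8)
Write $S=A+B$, which is itself Hermitian positive semi-definite. The plan is to deduce everything from the two range inclusions $\mathcal{R}(A)\subseteq\mathcal{R}(S)$ and $\mathcal{R}(B)\subseteq\mathcal{R}(S)$, which are immediate from $0\le A\le S$, $0\le B\le S$ and Lemma~\ref{lem:large positive moore-penrose invertible implies large range}. Since $S$ is Hermitian, $S^\dag S=SS^\dag$ is the orthogonal projection onto $\mathcal{R}(S)$, so these inclusions give the ``absorption'' identities $SS^\dag A=AS^\dag S=A$ and $SS^\dag B=BS^\dag S=B$; every manipulation below rests on these.

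For the four-fold identity \eqref{equ:two sides A or two sides B} I would simply compute. Substituting $B=S-A$ in $A:B=AS^\dag B$ and absorbing $AS^\dag S=A$ gives $A:B=A-AS^\dag A$; substituting $A=S-B$ and absorbing $SS^\dag B=B$ gives $A:B=B-BS^\dag B$. In particular $A:B$ is Hermitian, and since $S^\dag$ is Hermitian too (Lemma~\ref{lem:some trivial propositions of M-P inverse}), taking adjoints in $A:B=AS^\dag B$ yields $A:B=(AS^\dag B)^*=BS^\dag A=B:A$. For positivity I would use the resulting factorization
\[
A:B=A-AS^\dag A=A^{\frac12}\bigl(I-A^{\frac12}S^\dag A^{\frac12}\bigr)A^{\frac12}
\]
and check that $A^{\frac12}S^\dag A^{\frac12}\le I$. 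It is positive semi-definite because $S^\dag\ge 0$ (Lemma~\ref{lem:some trivial propositions of M-P inverse}), so it suffices to bound its norm by $1$; but its nonzero eigenvalues coincide with those of $(S^\dag)^{\frac12}A(S^\dag)^{\frac12}$, and from $A\le S$ together with $(S^\dag)^{\frac12}=(S^{\frac12})^\dag$ one gets $(S^\dag)^{\frac12}A(S^\dag)^{\frac12}\le(S^\dag)^{\frac12}S(S^\dag)^{\frac12}$, the right-hand side being an orthogonal projection and hence $\le I$.

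For the range equality, the inclusion $\mathcal{R}(A:B)\subseteq\mathcal{R}(A)\cap\mathcal{R}(B)$ follows at once from $0\le A:B\le A$ and $0\le A:B\le B$ — consequences of \eqref{equ:two sides A or two sides B} and of $AS^\dag A\ge 0$, $BS^\dag B\ge 0$ — via Lemma~\ref{lem:large positive moore-penrose invertible implies large range}. For the reverse inclusion I would argue directly: given $z\in\mathcal{R}(A)\cap\mathcal{R}(B)$, write $z=Aa$; then $Sa=Aa+Ba=z+Ba\in\mathcal{R}(B)$, so $Sa=Bx$ for some $x\in\mathbb{C}^n$, whence $(A:B)x=AS^\dag Bx=AS^\dag Sa=Aa=z$, i.e. $z\in\mathcal{R}(A:B)$.

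Apart from the routine Moore--Penrose bookkeeping, the only step that is not purely mechanical is this last reverse inclusion: the point is to notice that $Sa$ automatically lies in $\mathcal{R}(B)$, so it is an admissible value of $Bx$. I expect that to be the main obstacle, with the norm estimate $\Vert A^{\frac12}S^\dag A^{\frac12}\Vert\le 1$ in the positivity step a close second in terms of care required.
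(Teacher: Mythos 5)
Your proof is correct. Note that the paper does not actually prove this lemma: it is quoted from Anderson--Duffin (Lemmas 1--4 of that reference) together with Lemma~\ref{lem:large positive moore-penrose invertible implies large range}, and the original source establishes positivity and the range identity through the variational/limit characterization of the parallel sum ($\langle (A:B)x,x\rangle$ as a minimum of $\langle Ay,y\rangle+\langle Bz,z\rangle$ over $y+z=x$, and approximation by $A+\varepsilon I$). Your argument is instead pure Moore--Penrose algebra built on the absorption identities $SS^\dag A=A$, $SS^\dag B=B$, which is exactly the style the paper itself uses later in Theorem~\ref{thm:key equality concerning perturbation of parallel sum}; it has the advantage of being self-contained within the paper's toolkit. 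All the individual steps check out: the four-fold identity and the symmetry $A:B=B:A$ follow from the absorption identities and the Hermiticity of $S^\dag$; the bound $A^{1/2}S^\dag A^{1/2}\le I$ is justified correctly, since the nonzero eigenvalues of $A^{1/2}S^\dag A^{1/2}$ agree with those of $(S^\dag)^{1/2}A(S^\dag)^{1/2}\le (S^\dag)^{1/2}S(S^\dag)^{1/2}$, and the latter is the orthogonal projection onto $\mathcal{R}(S)$; the forward range inclusion follows from $0\le A:B\le A$ and $0\le A:B\le B$ via Lemma~\ref{lem:large positive moore-penrose invertible implies large range}; and the reverse inclusion argument (observing that $Sa=z+Ba\in\mathcal{R}(B)$ so that $Sa=Bx$ and $(A:B)x=AS^\dag Sa=Aa=z$) is the one genuinely clever step and is carried out correctly.
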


Now, we state the main result of this section as follows:
\begin{theorem}\label{thm:Almost least common upper bound}Let $X,Y\in\mathbb{C}^{n\times n}$ be both Hermitian positive semi-definite. Then $X+Y-4(X:Y)\ge 0$, and a common upper bound of $X$ and $Y$  can be given by
\begin{equation}\label{equ:Introduced Z}X\vee Y=\frac{X+Y}{2}+(X+Y)^\frac12\, W^\frac12\, (X+Y)^\frac12,
\end{equation}
where
\begin{equation}\label{equ:Introduced W}W=\Big[(X+Y)^\dag\Big]^\frac12\Big[\frac{X+Y}{4}-X:Y\Big]\Big[(X+Y)^\dag\Big]^\frac12.\end{equation}
Furthermore, $X\vee Y\le C_{X,Y}$ and $X\vee Y=C_{X,Y}$ if and only if $X:Y=0$, where
$C_{X,Y}$ is defined by \eqref{equ:defn of C X Y}.
\end{theorem}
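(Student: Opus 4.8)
The plan is to verify first that $W$ is a well-defined Hermitian positive semi-definite matrix, so that $W^{1/2}$ makes sense, and then to check each claimed property in turn. For the positivity of $W$, the heart of the matter is the inequality $X+Y-4(X:Y)\ge 0$; by Lemma~\ref{lem:two sides A or two sides B} we have $X:Y = X - X(X+Y)^\dag X$, so $\frac{X+Y}{4} - X:Y = \frac{X+Y}{4} - X + X(X+Y)^\dag X$, and a short computation rewrites this as $X(X+Y)^\dag X - \tfrac34 X + \tfrac14 Y$. Symmetrizing between $X$ and $Y$ (using the other expression $X:Y = Y - Y(X+Y)^\dag Y$) and restricting to $\mathcal R(X+Y)$, the point $x$ can be split along $\mathcal R(X+Y)$, and on that subspace $(X+Y)^\dag$ is a genuine inverse, so one is reduced to showing $4X(X+Y)^{-1}X \ge 2X - \tfrac12(X-Y)(X+Y)^{-1}(X-Y)$ or an equivalent quadratic-form inequality that follows from $(2X-(X+Y))(X+Y)^{-1}(2X-(X+Y))\ge 0$, i.e. $(X-Y)(X+Y)^{-1}(X-Y)\ge 0$. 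Once $W\ge 0$ is established, $X\vee Y$ as defined by \eqref{equ:Introduced Z} is manifestly Hermitian, and it is positive semi-definite because each summand is; moreover $\mathcal R(W)\subseteq\mathcal R(X+Y)$ ensures $(X+Y)^{1/2}W^{1/2}(X+Y)^{1/2}$ is compatible with the factor $\frac{X+Y}{2}$.

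Next I would prove that $X\vee Y$ is a common upper bound of $X$ and $Y$. Consider $X\vee Y - X = \frac{Y-X}{2} + (X+Y)^{1/2}W^{1/2}(X+Y)^{1/2}$. Working on $\mathcal R(X+Y)$ (outside it everything vanishes), conjugate by $\big[(X+Y)^\dag\big]^{1/2}$ on both sides; this turns the claim $X\vee Y - X\ge 0$ into $(X+Y)^{-1/2}\frac{Y-X}{2}(X+Y)^{-1/2} + W^{1/2}\ge 0$ on the appropriate subspace. Since $W^{1/2}\ge 0$ and $W = \big[(X+Y)^\dag\big]^{1/2}\big[\frac{X+Y}{4}-X:Y\big]\big[(X+Y)^\dag\big]^{1/2}$, the substance is again an inequality about the self-adjoint matrix $S = (X+Y)^{-1/2}(X-Y)(X+Y)^{-1/2}$, namely that $W$ dominates $\frac14 S^2$ — indeed a direct computation gives $\frac{X+Y}{4}-X:Y = \frac14(X-Y)(X+Y)^\dag(X-Y)$ on $\mathcal R(X+Y)$ (this is the same identity used above), so $W = \frac14 S^2$ exactly, and then $W^{1/2} = \frac12|S| \ge \pm\frac12 S$, which is precisely what is needed for both $X\vee Y \ge X$ and $X\vee Y\ge Y$.

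Finally, for the comparison with $C_{X,Y} = X+Y-X:Y$: we have $C_{X,Y} - X\vee Y = \frac{X+Y}{2} - X:Y - (X+Y)^{1/2}W^{1/2}(X+Y)^{1/2}$. Conjugating again by $\big[(X+Y)^\dag\big]^{1/2}$ on $\mathcal R(X+Y)$ and using $W = \frac14 S^2$, this reduces to showing $P - \frac14 S^2 - \frac12|S| \ge 0$ where $P = (X+Y)^{-1/2}\big(\frac{X+Y}{2}-X:Y\big)(X+Y)^{-1/2}$; but $\frac{X+Y}{2} - X:Y = \frac{X+Y}{4} + \big(\frac{X+Y}{4}-X:Y\big)$ gives $P = \frac12 I' + \frac14 S^2$ on the relevant subspace ($I'$ the identity on $\mathcal R(X+Y)$), so the inequality becomes $\frac12 I' - \frac12|S|\ge 0$, i.e. $\|S\|\le 1$. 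That bound holds because $0 \le X, Y \le X+Y$ forces $-(X+Y)\le X-Y\le X+Y$, hence $\|S\|\le 1$. Equality $X\vee Y = C_{X,Y}$ holds iff $\frac12|S| = \frac14 S^2$ on $\mathcal R(X+Y)$, i.e. $|S|(I'-\tfrac12|S|) = 0$; combined with $X:Y = \frac14(X+Y)^{1/2}S^2(X+Y)^{1/2}$ this forces $X:Y = 0$, and conversely $X:Y=0$ gives $W=0$ and $X\vee Y = \frac{X+Y}{2} = C_{X,Y}$. The main obstacle I anticipate is the bookkeeping with ranges and Moore–Penrose inverses: all the clean identities like $W=\frac14 S^2$ and $P = \frac12 I' + \frac14 S^2$ are only literally true after restricting to $\mathcal R(X+Y)$, and one must use Lemma~\ref{lem:some trivial propositions of M-P inverse} (that $\mathcal R((X+Y)^{1/2}) = \mathcal R(X+Y)$ and $((X+Y)^\dag)^{1/2} = ((X+Y)^{1/2})^\dag$) carefully to justify that the conjugations are reversible and that nothing is lost on the complement of $\mathcal R(X+Y)$, where $X$, $Y$, $X:Y$ and $X\vee Y$ all act as zero.
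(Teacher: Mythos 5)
Your overall strategy is essentially the paper's, just phrased without the Gelfand-transform language: conjugating by $\big[(X+Y)^\dag\big]^{1/2}$ reduces everything to the pair $X_1=\big[(X+Y)^\dag\big]^{1/2}X\big[(X+Y)^\dag\big]^{1/2}$ and $Y_1=P-X_1$, where $P=(X+Y)(X+Y)^\dag$; your $S$ is exactly $X_1-Y_1=2X_1-P$, and your identities $\frac{X+Y}{4}-X:Y=\frac14(X-Y)(X+Y)^\dag(X-Y)$, $W=\frac14S^2$ and $W^{1/2}=\frac12|S|$ are correct. They give clean proofs of $X+Y-4(X:Y)\ge 0$ and of $X\vee Y\ge X$, $X\vee Y\ge Y$ (the paper obtains the same facts from $W_1=\big(X_1-\frac{P}{2}\big)^2$ and $X_1\vee Y_1=\frac{P}{2}+W_1^{1/2}$). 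These two parts of your proposal are sound.

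The last part, however, contains compounding computational errors. Conjugating $C_{X,Y}-X\vee Y$ by $\big[(X+Y)^\dag\big]^{1/2}$ yields $\big[(X+Y)^\dag\big]^{1/2}\big(\frac{X+Y}{2}-X:Y\big)\big[(X+Y)^\dag\big]^{1/2}-W^{1/2}$; there is no extra $-\frac14S^2$ term, and the first summand equals $\frac14P+\frac14S^2$, not $\frac12I'+\frac14S^2$, since conjugating $\frac{X+Y}{4}$ produces $\frac14P$. The correct reduced quantity is therefore $\frac14P+\frac14S^2-\frac12|S|=\frac14\big(P-|S|\big)^2\ge 0$, which holds with no appeal to $\Vert S\Vert\le 1$; your reduction to $\frac12I'-\frac12|S|\ge 0$ is a different (albeit true) inequality and does not represent $C_{X,Y}-X\vee Y$. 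Consequently the equality analysis goes astray. Equality holds iff $|S|=P$, i.e. $S^2=P$, and since $X:Y=\frac14(X+Y)^{1/2}\big(P-S^2\big)(X+Y)^{1/2}$ --- not $\frac14(X+Y)^{1/2}S^2(X+Y)^{1/2}$ as you write --- this is equivalent to $X:Y=0$. Your converse is also false as stated: $X:Y=0$ gives $W=\frac14P\ne 0$ and hence $X\vee Y=X+Y=C_{X,Y}$, not $W=0$ and $X\vee Y=\frac{X+Y}{2}$ (note $C_{X,Y}=X+Y$ in that case, so $\frac{X+Y}{2}$ could not equal $C_{X,Y}$ anyway unless $X+Y=0$). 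The theorem's conclusion is unaffected and your framework can deliver it, but the final paragraph must be redone with the corrected constants before it constitutes a proof.
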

\begin{proof}(1) We prove that the matrix $X\vee Y$ defined by (\ref{equ:Introduced Z}) is a common upper bound of $X$ and $Y$.
Let $P=(X+Y)(X+Y)^\dag$. Then $P$ is an orthogonal projection and by Lemma~\ref{lem:some trivial propositions of M-P inverse}, we have
$$P=(X+Y)^\frac12 \Big[(X+Y)^\frac12\Big]^\dag=(X+Y)^\frac12 \Big[(X+Y)^\dag\Big]^\frac12=\Big[(X+Y)^\dag\Big]^\frac12 (X+Y)^\frac12,$$
which is the unit of the $C^*$-subalgebra $\mathfrak{B}$ of ${\cal L}(H)$ defined by
$$\mathfrak{B}=P{\cal L}(H)P=\Big[(X+Y)^\dag\Big]^\frac12 {\cal L}(H) \Big[(X+Y)^\dag\Big]^\frac12,$$
where $H=\mathbb{C}^{n}$ is a Hilbert space endowed with the usual inner product and $\mathcal{L}(H)\cong \mathbb{C}^{n\times n}$ is the set of all (bounded) linear operators on $H$. Let $X_1,Y_1\in \mathfrak{B}$ be Hermitian positive semi-definite defined by
\begin{equation}\label{equ:defn of X1 and Y1}X_1=\Big[(X+Y)^\dag\Big]^\frac12 X\Big[(X+Y)^\dag\Big]^\frac12\ \mbox{and}\ Y_1=\Big[(X+Y)^\dag\Big]^\frac12 Y\Big[(X+Y)^\dag\Big]^\frac12.\end{equation}
Then clearly, $X_1+Y_1=P$ and hence
$$X_1Y_1=X_1(P-X_1)=X_1-X_1^2=(P-X_1)X_1=Y_1X_1.$$
Let $C^*\big(P,X_1)$ be the unital commutative $C^*$-subalgebra of $\mathfrak{B}$ generated by $P$ and $X_1$, and $Sp(X_1)$ be the spectrum of $X_1$. Then by \cite[Section~1.1]{pedersen}, we know that $C^*\big(P,X_1)$ is isomorphic to $C\big(Sp(X_1)\big)$ via
Gelfand transform $\wedge$ such that
\begin{eqnarray*}\widehat{X_1}(t)=t, \,\widehat{Y_1}(t)=1-t\ \mbox{and}\ \widehat{P}(t)=1,\ \mbox{for any $t\in Sp(X_1)\subseteq [0,+\infty)$}.
\end{eqnarray*}

Now, we let $X_1\vee Y_1\in C^*\big(P,X_1)$ be such that
\begin{equation}\label{equ:expression of Gelfand transform}\widehat{X_1\vee Y_1}(t)=\max\Big\{\widehat{X_1}(t),\widehat{Y_1}(t)\Big\}=\frac12+\sqrt{\Big(t-\frac12\Big)^2}=\frac12+\sqrt{t^2-t+\frac14}.
\end{equation}
Then, clearly $X_1\vee Y_1$ is the least common upper bound of $X_1$ and $Y_1$ in $C^*\big(P,X_1)$.
The expression of $\widehat{X_1\vee Y_1}(t)$ given by  \eqref{equ:expression of Gelfand transform} indicates that
\begin{equation}\label{equ:expression of common upper bound X1 and Y1-1}X_1\vee Y_1=\frac{P}{2}+W_1^\frac12,
\end{equation}
where
\begin{equation}\label{equ:defn of W1}W_1=X_1^2-X_1+\frac{P}{4}=\big(X_1-\frac{P}{2}\big)^*\big(X_1-\frac{P}{2}\big)\ge 0.\end{equation}
In view of (\ref{equ:defn of W1}), (\ref{equ:defn of X1 and Y1}), \eqref{equ:two sides A or two sides B} and (\ref{equ:Introduced W}), we have
\begin{equation}\label{equ:W1 is W}W_1=\Big[(X+Y)^\dag\Big]^\frac12\Big[X(X+Y)^\dag X-X+\frac{X+Y}{4}\Big]\Big[(X+Y)^\dag\Big]^\frac12=W.\end{equation}
The expression of $W_1$ above, together with (\ref{equ:defn of W1}), indicates that
$$\frac{X+Y}{4}-X:Y=(X+Y)^\frac12 W_1 (X+Y)^\frac12\ge 0.$$
Put \begin{equation}\label{equ:defn of new Z1}Z=(X+Y)^\frac12 \cdot( X_1\vee Y_1) \cdot (X+Y)^\frac12.\end{equation}
Then since $X_1\vee Y_1\ge X_1$, we know from (\ref{equ:defn of X1 and Y1}) that
$$Z\ge (X+Y)^\frac12 \cdot X_1 \cdot (X+Y)^\frac12=X.$$ Similarly, it holds that $Z\ge Y$.
Moreover, from (\ref{equ:defn of new Z1}), (\ref{equ:expression of common upper bound X1 and Y1-1}), \eqref{equ:W1 is W} and (\ref{equ:Introduced Z}) we know  that $Z=X\vee Y$. This completes the proof that $X\vee Y$ is a common upper bound of $X$ and $Y$.

(2) We prove that $X\vee Y\le C_{X,Y}$ and $X\vee Y=C_{X,Y}$ if and only if $X:Y=0$. Indeed, by \eqref{equ:defn of C X Y}, \eqref{equ:Introduced Z}, (\ref{equ:two sides A or two sides B}), \eqref{equ:defn of X1 and Y1}, \eqref{equ:W1 is W}  and (\ref{equ:defn of W1}) we have
\begin{eqnarray*}&&C_{X,Y}-X\vee Y=\frac{X+Y}{2}-X:Y-(X+Y)^\frac12\, W^\frac12\, (X+Y)^\frac12\\
&&=(X+Y)^\frac12\left[\frac{P}{2}-\Big[(X+Y)^\dag\Big]^\frac12\,(X:Y)\, \Big[(X+Y)^\dag\Big]^\frac12-W^\frac12\right](X+Y)^\frac12\\
&&=(X+Y)^\frac12\left[\frac{P}{2}-X_1+X_1^2-W_1^\frac12\right](X+Y)^\frac12\\
&&=(X+Y)^\frac12\left[\frac{P}{4}+W_1-W_1^\frac12\right](X+Y)^\frac12\\
&&=(X+Y)^\frac12\left(W_1^\frac12-\frac{P}{2}\right)^2(X+Y)^\frac12=TT^*\ge 0,\end{eqnarray*}
where $T=(X+Y)^\frac12\left(W_1^\frac12-\frac{P}{2}\right)$.
Note that $\Big[(X+Y)^\dag\Big]^\frac12 T=W_1^\frac12-\frac{P}{2}$, so the discussion above indicates that
\begin{eqnarray*}&&C_{X,Y}=X\vee Y\Longleftrightarrow T=0\Longleftrightarrow W_1^\frac12=\frac{P}{2}\Longleftrightarrow W_1=\frac{P}{4}\\
&&\Longleftrightarrow X_1^2-X_1=0\ \mbox{by \eqref{equ:defn of W1}}\\
&&\Longleftrightarrow \Big[(X+Y)^\dag\Big]^\frac12 \Big[X(X+Y)^\dag X-X\Big] \Big[(X+Y)^\dag\Big]^\frac12=0\ \mbox{by \eqref{equ:defn of X1 and Y1}}\\
&&\Longleftrightarrow X:Y=0\ \mbox{by \eqref{equ:two sides A or two sides B}}.
\end{eqnarray*}
This completes the proof of all the assertions.
\end{proof}

Before ending this section, we make a few remarks on the common upper bound \eqref{equ:Introduced Z}. Let $X,Y\in\mathbb{C}^{n\times n}$ be both Hermitian positive semi-definite. If $Z$ is any common upper bound of $X$ and $Y$, then from Lemma~\ref{lem:large positive moore-penrose invertible implies large range} we have $\Vert Z\Vert\ge \max\{\Vert X\Vert, \Vert Y\Vert\}$.
It is interesting to find out a common upper bound which gets the equation above. The matrix $X\vee Y$ defined by \eqref{equ:Introduced Z} is such a common upper bound in the following two cases:

\textbf{Case 1:}\ $X$ and $Y$ are commutative. Indeed, if $XY=YX$, then $X, Y, (X+Y)^\frac12, (X+Y)^\dag
$ and $\big((X+Y)^\dag\big)^\frac12$ are commutative each other. It follows from \eqref{equ:Introduced W} that
\begin{equation*}(X+Y)^\frac12 W^\frac12 (X+Y)^\frac12=\left[(X+Y)W(X+Y)\right]^\frac12,
\end{equation*}
where
\begin{eqnarray*}(X+Y)W(X+Y)&=&(X+Y)\left[\frac{X+Y}{4}-X(X+Y)^\dag Y\right]\\
&=&\frac14 (X+Y)^2-X(X+Y)(X+Y)^\dag Y\\
&=&\frac14 (X+Y)^2-XY\\
&=&\frac14 (X-Y)^2.
\end{eqnarray*}
Accordingly, from \eqref{equ:Introduced Z} we have $X\vee Y=\frac12\left[X+Y+|X-Y|\right]$, which means clearly that
\begin{equation}\label{equ:reach the optimal situation}\Vert X\vee Y\Vert=\max\{\Vert X\Vert, \Vert Y\Vert\}\end{equation} by functional calculus in the commutative $C^*$-algebra generated by
$X$ and $Y$.

\textbf{Case 2:}\ One of $X$ and $Y$ is larger than another. We might as well assume that $X\le Y$. Following the notations as in the proof of
Theorem~\ref{thm:Almost least common upper bound}, we have $X_1\le Y_1$ and thus by \eqref{equ:defn of new Z1}  we conclude that
\begin{eqnarray*}X\vee Y=(X+Y)^\frac12 \cdot( X_1\vee Y_1) \cdot (X+Y)^\frac12=(X+Y)^\frac12\cdot Y_1 \cdot (X+Y)^\frac12=Y,\end{eqnarray*}
which leads to \eqref{equ:reach the optimal situation} obviously.
\begin{ex}{\rm Let $0<a<\frac12$, $X=\left(
                        \begin{array}{cc}
                           a & 0 \\
                          0 & 1 \\
                        \end{array}
                      \right)$ and $Y=\left(
                                        \begin{array}{cc}
                                          1 & 1 \\
                                          1 & 3 \\
                                        \end{array}
                                      \right)$. Then $YX\ne XY$, whereas $0\le X\le Y$.
}\end{ex}

Next, we consider the special case where the underlying matrices are orthogonal projections.
Assume that $P,Q\in\mathbb{C}^{n\times n}$ are two orthogonal projections. Let $P_0$ be the orthogonal projection from $\mathbb{C}^n$ onto $\mathcal{R}(P)\cap \mathcal{R}(Q)$. We prove that
\begin{equation}\label{equ:common upper bound wrt projections}P\vee Q=P+Q-P_0.\end{equation}
Indeed, by \cite[Theorem~8]{Anderson-Duffin} we have $P:Q=\frac12 P_0$, which means that $P_0, P+Q, (P+Q)^\frac12, (P+Q)^\dag$ and $\left((P+Q)^\dag\right)^\frac12$ are commutative each other.
It follows that
\begin{equation}\label{equ:half is the same}(P+Q)^\frac12 W^\frac12 (P+Q)^\frac12=\left[(P+Q)W(P+Q)\right]^\frac12,
\end{equation}
where
\begin{equation*}(P+Q)W(P+Q)=(P+Q)\left[\frac{P+Q}{4}-\frac12 P_0\right]=\frac{(P+Q-2P_0)^2}{4}.
\end{equation*}
Note that $P+Q-2P_0=(P-P_0)+(Q-P_0)\ge 0$, so the equation above indicates that
$\left[(P+Q)W(P+Q)\right]^\frac12=\frac{P+Q-2P_0}{2}$. This, together with \eqref{equ:Introduced Z} and \eqref{equ:half is the same}, yields
\eqref{equ:common upper bound wrt projections}.

Based on \eqref{equ:common upper bound wrt projections}, we prove that
\begin{equation*}\label{equ:reach the optimal situation--}\Vert P\vee Q\Vert=\max\{\Vert P\Vert, \Vert Q\Vert\}\Longleftrightarrow PQ=QP.\end{equation*}
In fact, if $PQ=QP$, then $P\vee Q$ given by \eqref{equ:common upper bound wrt projections} is an orthogonal projection and thus \eqref{equ:reach the optimal situation} is satisfied, with $X$ and $Y$ therein be replaced by $P$ and $Q$, respectively.

On the other hand, if $PQ\ne QP$, then the orthogonal projection $P-P_0$ is non-zero and from \eqref{equ:common upper bound wrt projections} we have
\begin{equation}\label{equ:norm larger than 1-1st}\Vert P\vee Q\Vert\ge \Vert (P-P_0)\cdot P\vee Q \cdot (P-P_0)\Vert=\Vert (P-P_0)+(P-P_0)Q(P-P_0)\Vert.\end{equation}
Similarly, $Q-P_0\ne 0$ and
\begin{equation}\label{equ:norm larger than 1-2nd}\Vert P\vee Q\Vert\ge \Vert (Q-P_0)+(Q-P_0)P(Q-P_0)\Vert.\end{equation}
Suppose on the contrary that $\Vert P\vee Q\Vert=1$, then it can be deduced from \eqref{equ:norm larger than 1-1st} and \eqref{equ:norm larger than 1-2nd} that
$$(P-P_0)Q(P-P_0)=0\ \mbox{and}\ (Q-P_0)P(Q-P_0)=0;$$ or equivalently, $Q(P-P_0)=0$ and $P(Q-P_0)=0$, that is, $QP=P_0$ and $PQ=P_0$, which is in  contradiction to the assumption that $PQ\ne QP$.

\section{Perturbation estimation for the parallel sum}\label{sec:perturbation analysis}
In this section, we study the perturbation estimation for the parallel sum of Hermitian positive semi-definite matrices.
\begin{theorem}\label{thm:key equality concerning perturbation of parallel sum}Suppose that $A,B,X,Y\in \mathbb{C}^{n\times n}$ are all Hermitian positive semi-definite. Let $T$ and $H$ be defined by \eqref{equ:defn of H}.
Then
\begin{equation}\label{equ:key equality concerning perturbation of parallel sum}H=\big[(A+B)^\dag B-(X+Y)^\dag Y\big]^*\cdot T\cdot\big[(A+B)^\dag B-(X+Y)^\dag Y\big].
\end{equation}
\end{theorem}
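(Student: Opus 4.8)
The plan is to express everything through $S:=A+B$, $R:=X+Y$ and $M:=S+R=A+B+X+Y$, to compute $H$ and the right-hand side of \eqref{equ:key equality concerning perturbation of parallel sum} separately in closed form, and then to check that the two expressions coincide; non-commutativity should turn out to be harmless, since every Moore--Penrose absorption that occurs does so ``from the correct side''.

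First I would apply the identity $A:B=B-B(A+B)^\dag B$ from \eqref{equ:two sides A or two sides B} of Lemma~\ref{lem:two sides A or two sides B} to each of the three pairs $(A,B)$, $(X,Y)$ and $(A+X,B+Y)$; since $(A+X)+(B+Y)=M$ this gives
\begin{equation*}
A:B=B-BS^\dag B,\qquad X:Y=Y-YR^\dag Y,\qquad (A+X):(B+Y)=(B+Y)-(B+Y)M^\dag(B+Y).
\end{equation*}
Substituting these into $H=(A+X):(B+Y)-A:B-X:Y$, the stand-alone terms $B$ and $Y$ cancel, and after expanding $(B+Y)M^\dag(B+Y)$ one gets the closed form
\begin{equation*}
H=B\big(S^\dag-M^\dag\big)B+Y\big(R^\dag-M^\dag\big)Y-BM^\dag Y-YM^\dag B.
\end{equation*}

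Next I would expand the right-hand side. Put $G=(A+B)^\dag B=S^\dag B$ and $G'=(X+Y)^\dag Y=R^\dag Y$; since $S^\dag$ and $R^\dag$ are Hermitian we have $G^*=BS^\dag$ and $G'^{*}=YR^\dag$, while by Lemma~\ref{lem:two sides A or two sides B} the middle factor satisfies $T=S:R=S-SM^\dag S=R-RM^\dag R=SM^\dag R=RM^\dag S$. Expanding $(G-G')^{*}T(G-G')$ into its four terms and simplifying each, I expect
\begin{equation*}
G^{*}TG=B\big(S^\dag-M^\dag\big)B,\quad G'^{*}TG'=Y\big(R^\dag-M^\dag\big)Y,\quad G^{*}TG'=BM^\dag Y,\quad G'^{*}TG=YM^\dag B,
\end{equation*}
whose combination is exactly the closed form of $H$ obtained above, proving \eqref{equ:key equality concerning perturbation of parallel sum}.

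The only point requiring care is the bookkeeping of the absorption identities that drive these simplifications. From $0\le B\le S$ and $0\le Y\le R$ we get $\mathcal R(B)\subseteq\mathcal R(S)$ and $\mathcal R(Y)\subseteq\mathcal R(R)$ by Lemma~\ref{lem:large positive moore-penrose invertible implies large range}; combining this with $\mathcal R(B)=\mathcal R(B^{*})$, $\mathcal R(Y)=\mathcal R(Y^{*})$ and the fact that $SS^\dag,S^\dag S$ (resp.\ $RR^\dag,R^\dag R$) are the orthogonal projections onto $\mathcal R(S)=\mathcal R(S^{*})$ (resp.\ $\mathcal R(R)$) yields $SS^\dag B=BS^\dag S=B$, $RR^\dag Y=YR^\dag R=Y$, which together with the defining relations $S^\dag S S^\dag=S^\dag$, $R^\dag R R^\dag=R^\dag$ are all that is needed. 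With these in hand each of the four products collapses to a single term after grouping the factors appropriately — for instance $G^{*}TG'=BS^\dag\big(SM^\dag R\big)R^\dag Y=(BS^\dag S)M^\dag(RR^\dag Y)=BM^\dag Y$ — so the computation is entirely mechanical, and there is in fact no serious obstacle beyond staying organized.
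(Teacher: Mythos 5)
Your proof is correct, and every identity you invoke is justified: the absorption relations $SS^\dag B=BS^\dag S=B$ and $RR^\dag Y=YR^\dag R=Y$ follow from $0\le B\le S$, $0\le Y\le R$ via Lemma~\ref{lem:large positive moore-penrose invertible implies large range}, and the four representations of $T$ that you use ($S-SM^\dag S=R-RM^\dag R=SM^\dag R=RM^\dag S$) are all contained in \eqref{equ:notation of parallel sum} and \eqref{equ:two sides A or two sides B}. The route differs from the paper's in its decomposition. You eliminate $A$ and $X$ at the outset by writing each parallel sum in the subtraction form $A:B=B-B(A+B)^\dag B$, reduce everything to the five matrices $B,Y,S,R,M$, and then verify that the left- and right-hand sides collapse to the same four-term closed form. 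The paper instead keeps the product form, expands $(A+X)(A{+}B{+}X{+}Y)^\dag(B+Y)$ into the four cross terms $I_1=AS^\dag B-A(A+B)^\dag B$, $I_2=AS^\dag Y$, $I_3=XS^\dag Y-X(X+Y)^\dag Y$, $I_4=XS^\dag B$, and rewrites each $I_j$ with $T$ sandwiched in the middle (using a resolvent-type identity such as $A[S^\dag-(A+B)^\dag]B=-AS^\dag[S-(A+B)](A+B)^\dag B$) before summing. Both arguments rest on the same range-inclusion absorptions; yours buys a more symmetric bookkeeping with fewer distinct matrices in play, while the paper's produces the factorization directly from the left-hand side rather than by matching two independently computed expressions.
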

\begin{proof} For simplicity, we put $S=A+B+X+Y$. Then
\begin{equation}\label{equ:sum of I1 through I4}H=(A+X)S^\dag (B+Y)-A(A+B)^\dag B-X(X+Y)^\dag Y=I_1+I_2+I_3+I_4,\end{equation}
where
\begin{eqnarray*}&&I_1=AS^\dag B-A(A+B)^\dag B, I_2=AS^\dag Y,\\
&&I_3=X S^\dag Y-X(X+Y)^\dag Y, I_4=XS^\dag B.
\end{eqnarray*}
Note that $$A+B\le S, \mathcal{R}\big((A+B)^\dag\big)=\mathcal{R}\big((A+B)^*\big)=\mathcal{R}(A+B)$$ and
$\mathcal{R}(S^\dag S)=\mathcal{R}(SS^\dag)=\mathcal{R}(S)$, so by Lemma~\ref{lem:large positive moore-penrose invertible implies large range} we have
$S^\dag S (A+B)^\dag=(A+B)^\dag$. Similarly, it holds that
$A(A+B)^\dag (A+B)=A$, $(A+B)(A+B)^\dag B=B$ and
$$(A+B)(A+B)^\dag T=T,$$
since $\mathcal{R}(T)=\mathcal{R}(A+B)\cap \mathcal{R}(X+Y)$ by Lemma~\ref{lem:two sides A or two sides B}.
Therefore,
\begin{eqnarray}I_1&=&A\big[S^\dag-(A+B)^\dag\big]B=-AS^\dag\big[S-(A+B)\big](A+B)^\dag B\nonumber\\
&=&-A(A+B)^\dag (A+B)\cdot S^\dag (X+Y)(A+B)^\dag B\nonumber\\
&=&-A(A+B)^\dag T(A+B)^\dag B\nonumber\\
&=&-\big[(A+B)-B\big](A+B)^\dag T(A+B)^\dag B\nonumber\\
\label{eqn:detailed expression of I1}&=&-T(A+B)^\dag B+B(A+B)^\dag T(A+B)^\dag B.
\end{eqnarray}
Similarly, we have
\begin{eqnarray}I_2&=&A(A+B)^\dag (A+B)\cdot S^\dag\cdot (X+Y)(X+Y)^\dag Y\nonumber\\
&=&A(A+B)^\dag T (X+Y)^\dag Y\nonumber\\
\label{eqn:detailed expression of I2}&=&T (X+Y)^\dag Y-B(A+B)^\dag T (X+Y)^\dag Y,\\
\label{eqn:detailed expression of I3}I_3&=&-T(X+Y)^\dag Y+Y(X+Y)^\dag T(X+Y)^\dag Y,\\
\label{eqn:detailed expression of I4}I_4&=&T(A+B)^\dag B-Y(X+Y)^\dag T(A+B)^\dag B.
\end{eqnarray}
Eq.\,(\ref{equ:key equality concerning perturbation of parallel sum}) then follows from (\ref{equ:sum of I1 through I4})--(\ref{eqn:detailed expression of I4}).
\end{proof}

 Now we use Eq.\,(\ref{equ:key equality concerning perturbation of parallel sum}) to study the perturbation estimation for the parallel sum.  First, we consider the one-sided perturbation as follows:

\begin{corollary}\label{cor:one side perturbation} Suppose that $A,B,X\in \mathbb{C}^{n\times n}$ are  all Hermitian positive semi-definite. Let \begin{equation}\label{equ:defn of G-one side} G=(A+X):B-A:B.\end{equation} Then
\begin{equation}\label{equ:norm upper bound for one side perturbation}\Vert G\Vert\le \frac{\Vert (A+B)^\dag B\Vert^2\cdot\Vert A+B\Vert\cdot \Vert X\Vert}{\Vert A+B\Vert+\Vert X\Vert}.\end{equation}
\end{corollary}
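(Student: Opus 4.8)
The plan is to specialize the factorization formula \eqref{equ:key equality concerning perturbation of parallel sum} to the case $Y=0$. First I would record the simplifications that occur in \eqref{equ:defn of H} and \eqref{equ:key equality concerning perturbation of parallel sum} when $Y=0$: by the definition \eqref{equ:notation of parallel sum} of the parallel sum we have $X:Y=X:0=X(X+0)^\dag 0=0$, so the matrix $H$ in \eqref{equ:defn of H} reduces to exactly $G$ as defined in \eqref{equ:defn of G-one side}; the matrix $T=(A+B):(X+Y)$ reduces to $(A+B):X$; and the factor $(X+Y)^\dag Y$ reduces to $X^\dag 0=0$. Substituting these into \eqref{equ:key equality concerning perturbation of parallel sum} yields the identity
\begin{equation*}
G=\big[(A+B)^\dag B\big]^*\cdot\big[(A+B):X\big]\cdot(A+B)^\dag B.
\end{equation*}

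Next I would take norms. Since $\Vert M^*\Vert=\Vert M\Vert$ and the $2$-norm is submultiplicative, the displayed identity gives
\begin{equation*}
\Vert G\Vert\le \Vert (A+B)^\dag B\Vert^2\cdot\Vert (A+B):X\Vert.
\end{equation*}
Finally I would estimate $\Vert (A+B):X\Vert$ by applying the Anderson--Duffin norm bound \eqref{equ:sharp upper bound for parallel sum wrt positive operators} to the Hermitian positive semi-definite pair $A+B$ and $X$, namely $\Vert (A+B):X\Vert\le \frac{\Vert A+B\Vert\,\Vert X\Vert}{\Vert A+B\Vert+\Vert X\Vert}$; combining the last two displays produces \eqref{equ:norm upper bound for one side perturbation}.

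There is essentially no serious obstacle here: the corollary is a direct specialization of Theorem~\ref{thm:key equality concerning perturbation of parallel sum} followed by two elementary norm inequalities. The only points that need a word of care are the bookkeeping that $X:0=0$ and $X^\dag 0=0$, so that $H=G$ and the correction term in \eqref{equ:key equality concerning perturbation of parallel sum} drops out, and the degenerate case $\Vert A+B\Vert+\Vert X\Vert=0$, i.e. $A=B=X=0$, in which $G=0$ and \eqref{equ:norm upper bound for one side perturbation} holds trivially once its right-hand side is read as $0$. I would also note in passing that the same substitution shows $G\ge 0$, since $(A+B):X\ge 0$ by Lemma~\ref{lem:two sides A or two sides B}, which recovers the one-sided instance of the positivity of $E$ mentioned after \eqref{eqn:defn of E}.
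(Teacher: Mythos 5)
Your proposal is correct and matches the paper's proof exactly: both set $Y=0$ in Theorem~\ref{thm:key equality concerning perturbation of parallel sum} to obtain $G=\big[(A+B)^\dag B\big]^*\cdot\big[(A+B):X\big]\cdot\big[(A+B)^\dag B\big]$ and then apply the norm bound \eqref{equ:sharp upper bound for parallel sum wrt positive operators} to $(A+B):X$. The extra bookkeeping about $X:0=0$, the degenerate case, and the positivity of $G$ is fine but not needed beyond what the paper records.
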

\begin{proof} If we put $Y=0$ in \eqref{equ:defn of H}, then a formula for $G$ can be derived immediately from (\ref{equ:key equality concerning perturbation of parallel sum}) as
\begin{equation}\label{equ:one side perturbation}G=\big[(A+B)^\dag B]^*\cdot \big[(A+B):X\big]\cdot \big[(A+B)^\dag B],\end{equation}
which leads obviously to the inequality (\ref{equ:norm upper bound for one side perturbation}) by using norm estimation (\ref{equ:sharp upper bound for parallel sum wrt positive operators}).
\end{proof}

A direct application of the preceding corollary is as follows:
\begin{corollary}{\rm \cite[Theorem~28]{Anderson-Duffin}}\  Suppose that $A,B,X\in \mathbb{C}^{n\times n}$ are  all Hermitian positive semi-definite. Let $G$ be defined by \eqref{equ:defn of G-one side}. Then
\begin{equation}\label{equ:norm upper bound for one side perturbation---}\Vert G\Vert\le \Vert (A+B)^\dag B\Vert^2\cdot \Vert X\Vert.\end{equation}
\end{corollary}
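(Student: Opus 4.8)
The plan is to read this off directly from Corollary~\ref{cor:one side perturbation}, which is the sharper statement already in hand. That corollary asserts
\begin{equation*}
\Vert G\Vert\le \frac{\Vert (A+B)^\dag B\Vert^2\cdot\Vert A+B\Vert\cdot \Vert X\Vert}{\Vert A+B\Vert+\Vert X\Vert},
\end{equation*}
at least when $\Vert A+B\Vert+\Vert X\Vert\ne 0$. Since $\Vert A+B\Vert\ge 0$ and $\Vert X\Vert\ge 0$, the elementary inequality $\frac{\Vert A+B\Vert}{\Vert A+B\Vert+\Vert X\Vert}\le 1$ holds, so the right-hand side above is bounded by $\Vert (A+B)^\dag B\Vert^2\cdot\Vert X\Vert$, which is exactly \eqref{equ:norm upper bound for one side perturbation---}.

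The only point needing a separate word is the degenerate case $\Vert A+B\Vert+\Vert X\Vert=0$; then $A=B=X=0$, hence $G=0$ and both sides of \eqref{equ:norm upper bound for one side perturbation---} vanish, so the inequality is trivial. (Equivalently, one may observe that if $A+B=0$ then $(A+B)^\dag B=0$ and the right-hand side of \eqref{equ:norm upper bound for one side perturbation---} is $0$, while $(A+X):B=X:0=0$ and $A:B=0$, so $G=0$.)

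Alternatively, and essentially equivalently, the bound can be obtained straight from the factorization \eqref{equ:one side perturbation}: since $(A+B):X\ge 0$ by Lemma~\ref{lem:two sides A or two sides B}, submultiplicativity of the $2$-norm gives $\Vert G\Vert\le \Vert (A+B)^\dag B\Vert^2\cdot\Vert (A+B):X\Vert$, and applying \eqref{equ:sharp upper bound for parallel sum wrt positive operators} together with $\frac{st}{s+t}\le t$ for $s,t\ge 0$ not both zero yields $\Vert (A+B):X\Vert\le \Vert X\Vert$.

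I do not anticipate any real obstacle here: the entire content is carried by Corollary~\ref{cor:one side perturbation}, and the passage to \eqref{equ:norm upper bound for one side perturbation---} is just the crude estimate $\frac{\Vert A+B\Vert}{\Vert A+B\Vert+\Vert X\Vert}\le 1$, plus a one-line check of the trivial case where $A+B$ vanishes.
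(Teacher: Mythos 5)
Your proposal is correct and follows exactly the paper's route: the paper presents this corollary as ``a direct application of the preceding corollary,'' i.e., it is obtained from \eqref{equ:norm upper bound for one side perturbation} by the crude estimate $\frac{\Vert A+B\Vert}{\Vert A+B\Vert+\Vert X\Vert}\le 1$, which is precisely your main argument. Your extra care with the degenerate case $A+B=0$ and the alternative derivation from \eqref{equ:one side perturbation} are harmless additions but not needed beyond what the paper leaves implicit.
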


Next, we consider the special case of the two-sided perturbation as follows:

\begin{theorem}\label{thm:two-sided perturbation} Suppose that $A,B,Z\in \mathbb{C}^{n\times n}$ are  all Hermitian positive semi-definite. Let
$\alpha>0,\beta>0$ and
\begin{equation}\label{eqn:defn of F two dides X}F_{\alpha,\beta}=(A+\alpha Z):(B+\beta Z)-A:B.\end{equation}
Then
\begin{eqnarray}\label{eqn:1st sharp of norm F}\Vert F_{\alpha,\beta}\Vert&\le& \frac{1}{\alpha+\beta}\left[\frac{\Vert (A+B)^\dag (\beta A-\alpha B)\Vert^2\cdot \Vert A+B\Vert}{\Vert A+B\Vert+(\alpha+\beta)\Vert Z\Vert}+\alpha\beta\right]\Vert Z\Vert\\
       \label{eqn:2nd sharp of norm F}&\le& \frac{1}{\alpha+\beta}\Big[\Vert (A+B)^\dag (\beta A-\alpha B)\Vert^2+\alpha\beta\Big]\Vert Z\Vert.
\end{eqnarray}
\end{theorem}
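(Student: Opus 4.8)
The plan is to obtain both inequalities from the master factorization formula \eqref{equ:key equality concerning perturbation of parallel sum} by specializing it to $X=\alpha Z$ and $Y=\beta Z$. With this choice $X+Y=(\alpha+\beta)Z$, and the matrix $H$ of \eqref{equ:defn of H} is exactly the ``second-order'' remainder $(A+\alpha Z):(B+\beta Z)-A:B-(\alpha Z):(\beta Z)$, so that, comparing with \eqref{eqn:defn of F two dides X},
\[
F_{\alpha,\beta}=H+(\alpha Z):(\beta Z).
\]
Thus everything reduces to (i) identifying $(\alpha Z):(\beta Z)$, (ii) deducing a clean estimate for $\Vert H\Vert$ from \eqref{equ:key equality concerning perturbation of parallel sum}, and (iii) combining the two via the triangle inequality.

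For (i) I would use $\bigl((\alpha+\beta)Z\bigr)^\dag=\tfrac{1}{\alpha+\beta}Z^\dag$ together with $ZZ^\dag Z=Z$: this gives $(\alpha Z):(\beta Z)=\alpha Z\bigl((\alpha+\beta)Z\bigr)^\dag\beta Z=\tfrac{\alpha\beta}{\alpha+\beta}Z$ and, similarly, $(X+Y)^\dag Y=\tfrac{\beta}{\alpha+\beta}Z^\dag Z$. Hence $F_{\alpha,\beta}=H+\tfrac{\alpha\beta}{\alpha+\beta}Z$, and since both summands are Hermitian positive semi-definite (recall $H\ge 0$), $\Vert F_{\alpha,\beta}\Vert\le\Vert H\Vert+\tfrac{\alpha\beta}{\alpha+\beta}\Vert Z\Vert$.

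Step (ii) is the technical heart. Put $T=(A+B):(X+Y)=(A+B):\bigl((\alpha+\beta)Z\bigr)$ and let $P=(A+B)^\dag(A+B)$, $Q=Z^\dag Z$ be the orthogonal projections onto $\mathcal{R}(A+B)$ and $\mathcal{R}(Z)$. By Lemma~\ref{lem:two sides A or two sides B}, $T\ge 0$ and $\mathcal{R}(T)=\mathcal{R}(A+B)\cap\mathcal{R}(Z)$, so $TP=T=TQ$. Writing $(A+B)^\dag B=P-(A+B)^\dag A$ and $(X+Y)^\dag Y=\tfrac{\beta}{\alpha+\beta}Q$, the two projection contributions cancel against $TP$ and $TQ$, and a short computation leaves
\[
T\bigl[(A+B)^\dag B-(X+Y)^\dag Y\bigr]=-\tfrac{1}{\alpha+\beta}\,T\,(A+B)^\dag(\beta A-\alpha B).
\]
Substituting this into \eqref{equ:key equality concerning perturbation of parallel sum} (and using $T=T^*$ to treat the left factor symmetrically) collapses it to the sandwich
\[
H=\tfrac{1}{(\alpha+\beta)^2}\,\bigl[(A+B)^\dag(\beta A-\alpha B)\bigr]^*\,T\,\bigl[(A+B)^\dag(\beta A-\alpha B)\bigr].
\]
Submultiplicativity of the norm then gives $\Vert H\Vert\le\tfrac{1}{(\alpha+\beta)^2}\Vert(A+B)^\dag(\beta A-\alpha B)\Vert^2\,\Vert T\Vert$, and applying \eqref{equ:sharp upper bound for parallel sum wrt positive operators} to $T=(A+B):\bigl((\alpha+\beta)Z\bigr)$ bounds $\Vert T\Vert$ by $\tfrac{(\alpha+\beta)\Vert A+B\Vert\,\Vert Z\Vert}{\Vert A+B\Vert+(\alpha+\beta)\Vert Z\Vert}$.

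Combining the two bounds from (i) and (ii) yields \eqref{eqn:1st sharp of norm F} after simplification, and \eqref{eqn:2nd sharp of norm F} follows at once by replacing the factor $\tfrac{\Vert A+B\Vert}{\Vert A+B\Vert+(\alpha+\beta)\Vert Z\Vert}\le 1$ by $1$. The hard part is the cancellation in step (ii): the ranges of $T$, $A+B$ and $Z$ must be tracked carefully so that precisely the combination $\beta A-\alpha B$ survives, with the correct scalar $\tfrac{1}{\alpha+\beta}$, rather than a messier remainder; the degenerate case $Z=0$ needs no separate treatment since then $F_{\alpha,\beta}=0$ and both right-hand sides vanish.
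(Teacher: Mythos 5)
Your proposal is correct and follows essentially the same route as the paper: both specialize the factorization \eqref{equ:key equality concerning perturbation of parallel sum} to $X=\alpha Z$, $Y=\beta Z$, use the range identities $T=T(A+B)^\dag(A+B)=TZ^\dag Z$ to collapse the sandwich to $\tfrac{1}{(\alpha+\beta)^2}\bigl[(A+B)^\dag(\beta A-\alpha B)\bigr]^*T\bigl[(A+B)^\dag(\beta A-\alpha B)\bigr]+\tfrac{\alpha\beta}{\alpha+\beta}Z$, and then apply the triangle inequality together with the bound \eqref{equ:sharp upper bound for parallel sum wrt positive operators} on $\Vert T\Vert$. The only cosmetic difference is that the paper keeps the two terms inside a single norm before estimating, which is equivalent to your separate triangle-inequality step.
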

\begin{proof}For simplicity, we put
\begin{equation}\label{equ:defn of T S anpha and beta}T_{\alpha,\beta}=(A+B): \big((\alpha+\beta)Z\big)\ \mbox{an}\ S_{\alpha,\beta}=(A+B)^\dag B-\big((\alpha+\beta)Z\big)^\dag (\beta Z).\end{equation}
Since $\mathcal{R}(T_{\alpha,\beta})=\mathcal{R}(A+B)\cap \mathcal{R}(Z)$ by Lemma~\ref{lem:two sides A or two sides B}, we have
\begin{equation*}\label{equ:replacement technique}Z^\dag Z T_{\alpha,\beta}=T_{\alpha,\beta}=(A+B)^\dag (A+B) T_{\alpha,\beta}.\end{equation*}
The equations above, together with \eqref{equ:defn of T S anpha and beta}, yield
\begin{equation}\label{equ:expression of S T alpha and beta}S_{\alpha,\beta}^* T_{\alpha,\beta}=\frac{\alpha B-\beta A}{\alpha+\beta}(A+B)^\dag T_{\alpha,\beta}, T_{\alpha,\beta} S_{\alpha,\beta}=T_{\alpha,\beta}(A+B)^\dag\frac{\alpha B-\beta A}{\alpha+\beta}.
\end{equation}
Note that $(\alpha Z):(\beta Z)=\frac{\alpha\beta}{\alpha+\beta}Z$,  so by \eqref{eqn:defn of F two dides X}, \eqref{equ:defn of H},
\eqref{equ:key equality concerning perturbation of parallel sum}, \eqref{equ:defn of T S anpha and beta}, \eqref{equ:expression of S T alpha and beta}
and \eqref{equ:sharp upper bound for parallel sum wrt positive operators},  we have
\begin{eqnarray*}\Vert F_{\alpha,\beta}\Vert&=&\Vert S_{\alpha,\beta}^* T_{\alpha,\beta} S_{\alpha,\beta}+(\alpha Z):(\beta Z)\Vert\\
&=& \left\Vert \frac{\beta A-\alpha B}{\alpha+\beta}\cdot (A+B)^\dag \cdot T_{\alpha,\beta} \cdot (A+B)^\dag\cdot \frac{\beta A-\alpha B}{\alpha+\beta} + \frac{\alpha\beta}{\alpha+\beta}Z \right\Vert\\
&\le&\frac{\Vert T_{\alpha,\beta}\Vert\cdot \Vert (A+B)^\dag (\beta A-\alpha B)\Vert^2}{(\alpha+\beta)^2}+\frac{\alpha\beta}{\alpha+\beta}\Vert Z\Vert\\
&\le& \frac{1}{\alpha+\beta}\left[\frac{\Vert (A+B)^\dag (\beta A-\alpha B)\Vert^2\cdot \Vert A+B\Vert}{\Vert A+B\Vert+(\alpha+\beta)\Vert Z\Vert}+\alpha\beta\right]\Vert Z\Vert\\
&\le& \frac{1}{\alpha+\beta}\Big[\Vert (A+B)^\dag (\beta A-\alpha B)\Vert^2+\alpha\beta\Big]\Vert Z\Vert.\qedhere
\end{eqnarray*}
\end{proof}

\begin{rem}{\rm Let $X,Y\in\mathbb{C}^{n\times n}$ be any Hermitian positive semi-definite matrices and let $\alpha,\beta$ be any positive numbers.
As the numbers of the resistors in electronic circuits can be viewed as positive scalar matrices, it is meaningful to find out a Hermitian positive semi-definite matrix $Z$  such that $\alpha Z\ge X$ and $\beta Z\ge Y$.

One solution to the problem above is $Z_{\alpha,\beta}$, which can be derived directly by \eqref{equ:Introduced Z} and \eqref{equ:Introduced W} as
\begin{equation}\label{equ:Introduced Z-alpha and beta}Z_{\alpha,\beta}=\frac{X}{\alpha}\vee \frac{Y}{\beta}=\frac{\beta X+\alpha Y}{2\alpha\beta}+\left(\frac{\beta X+\alpha Y}{\alpha\beta}\right)^\frac12\, W^\frac12_{\alpha,\beta}\, \left(\frac{\beta X+\alpha Y}{\alpha\beta}\right)^\frac12,
\end{equation}
where
\begin{equation*}\label{equ:Introduced W-alpha and beta}W_{\alpha,\beta}=\left[\left(\frac{\beta X+\alpha Y}{\alpha\beta}\right)^\dag\right]^\frac12\left[\frac{\beta X+\alpha Y}{4\alpha\beta}-X(\beta X+\alpha Y)^\dag Y\right]\left[\left(\frac{\beta X+\alpha Y}{\alpha\beta}\right)^\dag\right]^\frac12.\end{equation*}
}\end{rem}

Now, we consider the general case of the two-sided perturbation of the parallel sum as follows:
\begin{theorem}\label{thm:norm upper bounds with parameters} Suppose that $A,B,X,Y\in \mathbb{C}^{n\times n}$ are  all Hermitian positive semi-definite. Let $E$ be defined by \eqref{eqn:defn of E}. Then
\begin{eqnarray}\label{eqn:norm upper bound os E with parameters-1}\hspace{-2em}\Vert E\Vert &\le& \inf_{\alpha>0,\beta>0}\left[\frac{\Vert (A+B)^\dag (\beta A-\alpha B)\Vert^2\cdot \Vert A+B\Vert}{(\alpha+\beta)\Vert A+B\Vert+(\alpha+\beta)^2\left\Vert Z_{\alpha,\beta}\right\Vert}+\frac{\alpha\beta}{\alpha+\beta}\right]\big\Vert Z_{\alpha,\beta}\big\Vert\\
\label{eqn:norm upper bound os E with parameters-1.5}\hspace{-2em}&\le&\inf_{\alpha>0,\beta>0}\frac{1}{\alpha+\beta}\Big[\Vert (A+B)^\dag (\beta A-\alpha B)\Vert^2+\alpha\beta\Big]\big\Vert Z_{\alpha,\beta}\big\Vert\\
\label{eqn:norm upper bound os E with parameters-2}\hspace{-2em}&=&\inf_{t>0}f(t),
\end{eqnarray}
where $Z_{\alpha,\beta}$ is given by \eqref{equ:Introduced Z-alpha and beta} such that
$Z_{\alpha,\beta}=\frac{1}{\alpha}\left(X\vee \frac{Y}{t}\right)$ for $t=\frac{\beta}{\alpha}$,
and
\begin{equation}\label{equ:defn of f}f(t)=\frac{1}{1+t}\left[\Vert (A+B)^\dag (tA-B)\Vert^2+t\right]\cdot \left\Vert X\vee \frac{Y}{t}\right\Vert.\end{equation}
\end{theorem}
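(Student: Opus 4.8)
The plan is to obtain \eqref{eqn:norm upper bound os E with parameters-1} from the special case already settled in Theorem~\ref{thm:two-sided perturbation} by a monotonicity comparison, and then to reduce the two-parameter infimum to the one-parameter infimum $\inf_{t>0}f(t)$ by exploiting the positive homogeneity of the operation $\vee$ from Theorem~\ref{thm:Almost least common upper bound}. Throughout, the heavy lifting (the factorization formula of Theorem~\ref{thm:key equality concerning perturbation of parallel sum} and the resulting bound in Theorem~\ref{thm:two-sided perturbation}) is already in hand, so this statement is essentially a corollary.

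First I would fix $\alpha,\beta>0$ and set $Z_{\alpha,\beta}=\frac{X}{\alpha}\vee\frac{Y}{\beta}$ as in \eqref{equ:Introduced Z-alpha and beta}. By Theorem~\ref{thm:Almost least common upper bound} applied to the pair $\frac{X}{\alpha},\frac{Y}{\beta}$, the matrix $Z_{\alpha,\beta}$ is a common upper bound of these two matrices, that is, $\alpha Z_{\alpha,\beta}\ge X$ and $\beta Z_{\alpha,\beta}\ge Y$. Hence $A+\alpha Z_{\alpha,\beta}\ge A+X\ge A$ and $B+\beta Z_{\alpha,\beta}\ge B+Y\ge B$, so by the monotonicity of the parallel sum (cf.\ \cite[Corollary~21]{Anderson-Duffin}) we get $0\le E\le F_{\alpha,\beta}$, where $F_{\alpha,\beta}$ denotes the matrix in \eqref{eqn:defn of F two dides X} with $Z$ replaced by $Z_{\alpha,\beta}$; Lemma~\ref{lem:large positive moore-penrose invertible implies large range} then yields $\Vert E\Vert\le\Vert F_{\alpha,\beta}\Vert$. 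Applying \eqref{eqn:1st sharp of norm F}--\eqref{eqn:2nd sharp of norm F} of Theorem~\ref{thm:two-sided perturbation} with $Z=Z_{\alpha,\beta}$, rewriting the factor $\frac{1}{\alpha+\beta}\cdot\frac{\Vert A+B\Vert}{\Vert A+B\Vert+(\alpha+\beta)\Vert Z_{\alpha,\beta}\Vert}$ as $\frac{\Vert A+B\Vert}{(\alpha+\beta)\Vert A+B\Vert+(\alpha+\beta)^2\Vert Z_{\alpha,\beta}\Vert}$, and taking the infimum over $\alpha,\beta>0$, I obtain \eqref{eqn:norm upper bound os E with parameters-1}; moreover \eqref{eqn:norm upper bound os E with parameters-1}$\le$\eqref{eqn:norm upper bound os E with parameters-1.5} is precisely the passage $\frac{\Vert A+B\Vert}{\Vert A+B\Vert+(\alpha+\beta)\Vert Z_{\alpha,\beta}\Vert}\le 1$ already recorded in Theorem~\ref{thm:two-sided perturbation}.

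It then remains to identify the infimum in \eqref{eqn:norm upper bound os E with parameters-1.5} with $\inf_{t>0}f(t)$. The key observation is the scaling identity $(cX)\vee(cY)=c\,(X\vee Y)$ for every $c>0$: since $(cM)^\dag=c^{-1}M^\dag$ one has $(cX):(cY)=c\,(X:Y)$, whence the matrix $W$ in \eqref{equ:Introduced W} is unchanged under $(X,Y)\mapsto(cX,cY)$, while $\big(c(X+Y)\big)^{1/2}W^{1/2}\big(c(X+Y)\big)^{1/2}=c\,(X+Y)^{1/2}W^{1/2}(X+Y)^{1/2}$, so the right-hand side of \eqref{equ:Introduced Z} simply scales by $c$. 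Taking $c=1/\alpha$ and $t=\beta/\alpha$ gives $Z_{\alpha,\beta}=\frac{1}{\alpha}\big(X\vee\frac{Y}{t}\big)$, hence $\Vert Z_{\alpha,\beta}\Vert=\frac{1}{\alpha}\big\Vert X\vee\frac{Y}{t}\big\Vert$. Substituting $\beta=t\alpha$ into the bracketed expression of \eqref{eqn:norm upper bound os E with parameters-1.5} and using $\beta A-\alpha B=\alpha(tA-B)$, $\alpha\beta=t\alpha^2$ and $\alpha+\beta=\alpha(1+t)$, all powers of $\alpha$ cancel and the expression reduces to $f(t)$ of \eqref{equ:defn of f}. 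Since it depends on $(\alpha,\beta)$ only through $t=\beta/\alpha$, which ranges over all of $(0,+\infty)$ as $(\alpha,\beta)$ ranges over $(0,+\infty)^2$, the two-parameter infimum collapses to $\inf_{t>0}f(t)$, establishing \eqref{eqn:norm upper bound os E with parameters-2}.

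The two places that need a little care are the correct invocation of the monotonicity of the parallel sum in the sandwich $0\le E\le F_{\alpha,\beta}$, and the Moore--Penrose bookkeeping behind the homogeneity law $(cX)\vee(cY)=c\,(X\vee Y)$; beyond these, the chain \eqref{eqn:norm upper bound os E with parameters-1}$\ge$\eqref{eqn:norm upper bound os E with parameters-1.5}$=$\eqref{eqn:norm upper bound os E with parameters-2} is routine algebra, so I do not expect a serious obstacle.
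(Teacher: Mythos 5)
Your proposal is correct and follows essentially the same route as the paper: the common upper bound $Z_{\alpha,\beta}$ gives $E\le F_{\alpha,\beta}$ by monotonicity of the parallel sum (which the paper deduces from the factorization formula \eqref{equ:key equality concerning perturbation of parallel sum} rather than citing Anderson--Duffin directly), and Theorem~\ref{thm:two-sided perturbation} then yields the bounds. Your explicit verification of the homogeneity $(cX)\vee(cY)=c\,(X\vee Y)$ and the collapse of the two-parameter infimum to $\inf_{t>0}f(t)$ fills in details the paper leaves implicit, but it is the same argument.
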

\begin{proof} Let $Z_{\alpha,\beta}$ be given by \eqref{equ:Introduced Z-alpha and beta} for any $\alpha>0$ and $\beta>0$. Then  $\alpha Z_{\alpha,\beta}\ge X$ and $\beta Z_{\alpha,\beta}\ge Y$, which means by \eqref{equ:key equality concerning perturbation of parallel sum}
 than $E\le F_{\alpha,\beta}$ and thus $\Vert E\Vert\le \Vert F_{\alpha,\beta}\Vert$, where $F_{\alpha,\beta}$ is defined by \eqref{eqn:defn of F two dides X} with $Z$ therein be replaced by $Z_{\alpha,\beta}$.
 The desired norm upper bounds follows immediately from Theorem~\ref{thm:two-sided perturbation}.
\end{proof}

Putting $\alpha=\beta=1$ in  \eqref{eqn:norm upper bound os E with parameters-1.5}, we get a corollary as follows:

\begin{corollary}\label{cor:norm upper bound wrt special Z} Suppose that $A,B,X,Y\in \mathbb{C}^{n\times n}$ are  all Hermitian positive semi-definite. Let $E$ be defined by \eqref{eqn:defn of E}. Then
\begin{eqnarray}\label{eqn:2nd sharp of norm E with Z---}\Vert E\Vert\le \mu_{A,B}\Vert X\vee Y\Vert,
\end{eqnarray}
where $X\vee Y$ is  given by  \eqref{equ:Introduced Z}    and  $\mu_{A,B}$  is defined by
\begin{equation}\label{equ:defn of mu A B}\mu_{A,B}=\frac{1}{2}\left[\Vert (A+B)^\dag (A-B)\Vert^2+1\right].\end{equation}
\end{corollary}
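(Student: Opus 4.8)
The plan is to specialize Theorem~\ref{thm:norm upper bounds with parameters} at the particular parameter choice $\alpha=\beta=1$, since Corollary~\ref{cor:norm upper bound wrt special Z} is advertised in the excerpt as exactly that specialization. First I would recall that $Z_{\alpha,\beta}=\frac{X}{\alpha}\vee\frac{Y}{\beta}$ by \eqref{equ:Introduced Z-alpha and beta}, so that putting $\alpha=\beta=1$ gives $Z_{1,1}=X\vee Y$, the matrix defined by \eqref{equ:Introduced Z}. Next I would evaluate the coefficient in the bound \eqref{eqn:norm upper bound os E with parameters-1.5}: with $\alpha=\beta=1$ the factor $\frac{1}{\alpha+\beta}\big[\Vert(A+B)^\dag(\beta A-\alpha B)\Vert^2+\alpha\beta\big]$ becomes $\frac12\big[\Vert(A+B)^\dag(A-B)\Vert^2+1\big]$, which is precisely $\mu_{A,B}$ as defined by \eqref{equ:defn of mu A B}.

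Then the chain of inequalities is immediate: since \eqref{eqn:norm upper bound os E with parameters-1.5} asserts
$$\Vert E\Vert\le\inf_{\alpha>0,\beta>0}\frac{1}{\alpha+\beta}\Big[\Vert(A+B)^\dag(\beta A-\alpha B)\Vert^2+\alpha\beta\Big]\big\Vert Z_{\alpha,\beta}\big\Vert,$$
and the infimum is over all positive $\alpha,\beta$, in particular the value at $\alpha=\beta=1$ is an upper bound for that infimum (more precisely, the infimum is $\le$ that value). Hence $\Vert E\Vert\le\mu_{A,B}\Vert X\vee Y\Vert$, which is \eqref{eqn:2nd sharp of norm E with Z---}. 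Alternatively, one can bypass Theorem~\ref{thm:norm upper bounds with parameters} and argue directly: by Theorem~\ref{thm:Almost least common upper bound}, $X\vee Y$ is a common upper bound of $X$ and $Y$, so $X\vee Y\ge X$ and $X\vee Y\ge Y$; applying Theorem~\ref{thm:two-sided perturbation} with $Z=X\vee Y$ and $\alpha=\beta=1$ — using the monotonicity observation from the proof of Theorem~\ref{thm:norm upper bounds with parameters} that $E\le F_{1,1}$ when $Z\ge X$ and $Z\ge Y$, whence $\Vert E\Vert\le\Vert F_{1,1}\Vert$ by Lemma~\ref{lem:large positive moore-penrose invertible implies large range} — and then reading off \eqref{eqn:2nd sharp of norm F} with these substitutions gives exactly $\Vert E\Vert\le\frac12\big[\Vert(A+B)^\dag(A-B)\Vert^2+1\big]\Vert X\vee Y\Vert=\mu_{A,B}\Vert X\vee Y\Vert$.

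There is essentially no obstacle here; the only things to be careful about are bookkeeping ones. I would double-check that the sign inside the norm is irrelevant, i.e. $\Vert(A+B)^\dag(A-B)\Vert=\Vert(A+B)^\dag(B-A)\Vert$, which is clear since $(A+B)^\dag(B-A)=-(A+B)^\dag(A-B)$ and the $2$-norm is invariant under negation; this reconciles the $\beta A-\alpha B=A-B$ appearing from Theorem~\ref{thm:two-sided perturbation} with the $A-B$ in \eqref{equ:defn of mu A B}. I would also confirm that $X\vee Y$ is indeed Hermitian positive semi-definite so that Lemma~\ref{lem:large positive moore-penrose invertible implies large range} applies to $E\le F_{1,1}$ — but this is guaranteed by Theorem~\ref{thm:Almost least common upper bound}, which exhibits $X\vee Y$ as a positive common upper bound. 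So the proof is a two-line invocation of the preceding theorem at $\alpha=\beta=1$.
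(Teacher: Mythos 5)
Your proof is correct and is exactly the paper's argument: the paper obtains the corollary by the single line ``Putting $\alpha=\beta=1$ in \eqref{eqn:norm upper bound os E with parameters-1.5}'', which is precisely your specialization with $Z_{1,1}=X\vee Y$ and coefficient $\mu_{A,B}$. The extra checks you note (sign invariance of the norm, positivity of $X\vee Y$) are harmless but not needed beyond what Theorem~\ref{thm:norm upper bounds with parameters} already provides.
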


\begin{rem}\label{rem:comparison of upper bound wrt Z}{\rm Suppose that $A,B,X,Y\in\mathbb{C}^{n\times n}$ are all Hermitian positive semi-definite. Let $\lambda_{A,B}$ and $\mu_{A,B}$ be defined by  \eqref{equ:defn of lambda A B}   and  \eqref{equ:defn of mu A B}, respectively. Then
\begin{eqnarray*}\mu_{A,B}&=&\frac12\Vert (A+B)^\dag A-(A+B)^\dag B\Vert^2+\frac12\\
&\le&\frac12\left[\Vert (A+B)^\dag A\Vert+\Vert (A+B)^\dag B\Vert\right]^2+\frac12\\
&\le&\Vert (A+B)^\dag A\Vert^2+\Vert (A+B)^\dag B\Vert^2+\frac12\\
&\le&\lambda_{A,B}.
\end{eqnarray*}
The inequalities of $\mu_{A,B}\le \lambda_{A,B}$ and $\Vert X\vee Y\Vert \le \Vert X+Y\Vert$ indicate that upper bound \eqref{eqn:2nd sharp of norm E with Z---} is sharper than the original norm upper bound \eqref{equ:Anderson-Duffin's upper bound}.
}\end{rem}

\begin{rem}{\rm One special case of the two-sided perturbation \eqref{eqn:defn of F two dides X} is $A=B$. Note that $$(A+X):(A+X)-A:A=\frac12 (A+X)-\frac12 A=\frac12 X,$$  so in this case norm upper bound \eqref{eqn:2nd sharp of norm E with Z---} is accurate.
}\end{rem}

\begin{rem}{\rm Given any natural number $n$ and any $k_i>0$ for $i=1,2,3,4$, let $A=k_1 I_n, B=k_2 I_n, X=k_3 I_n$ and $Y=k_4 I_n$,
where $I_n$ is the identity matrix in
$\mathbb{C}^{n\times n}$. Let $E$ be defined by \eqref{eqn:defn of E}.  Then \eqref{eqn:norm upper bound os E with parameters-1} becomes an equation if we put $\alpha=k_3$ and $\beta=k_4$ therein.
}\end{rem}

\section{Numerical examples}\label{sec:Numerical examples}
In this section, we provide two numerical examples as follows.
\begin{ex}\label{ex:parameter to be selected}{\rm For any $t\in (0,\frac{\pi}{2})$, let $A(t),B(t)\in\mathbb{C}^{2\times 2}$ be defined by
$$A(t)=\left(
         \begin{array}{cc}
           \cos(t) & \frac14\sin(t) \\
           \frac14\sin(t) & \cos(t)\\
         \end{array}
       \right)\ \mbox{and}\ B(t)=\left(
         \begin{array}{cc}
           \cos(t) & -\frac14\sin(t) \\
           -\frac14\sin(t) & \cos(t)\\
         \end{array}
       \right).$$
Put $A=A(\frac{\pi}{6}), X=A(\frac{5\pi}{32})-A(\frac{\pi}{6}), B=B(\frac{\pi}{6})$ and $Y=B(\frac{3\pi}{32})-B(\frac{\pi}{6})$. Then $A,B,X$ and $Y$ are all positive definite. Let $E$ and $f$ be defined by \eqref{eqn:defn of E} and \eqref{equ:defn of f}, respectively.  Then
\begin{equation*}E=A\left(\frac{5\pi}{32}\right):B\left(\frac{3\pi}{32}\right)-A\left(\frac{\pi}{6}\right):B\left(\frac{\pi}{6}\right)\ \mbox{and}\ \Vert E\Vert=0.0453,\end{equation*}
and from the graph of $f$ drawn by using Matlab command ``fplot" or by using Matlab command ``fmincon" alternatively, we know that
$f$ gets its infimum around the point $t=6.2197$; that is,  $\inf\{t>0|f(t)\}\thickapprox f(6.2197)=0.0511$.
Thus, a comparison of the errors can be provided as in Table~\ref{tab:1-st comparison}, which shows that for this example, norm upper bound \eqref{eqn:norm upper bound os E with parameters-2} is much better than the other two.
\begin{table}[htbp]
  \caption{Comparison of the errors associated to norm upper bounds \eqref{equ:Anderson-Duffin's upper bound}, \eqref{eqn:norm upper bound os E with parameters-2} and \eqref{eqn:2nd sharp of norm E with Z---}
\label{tab:1-st comparison}}
  \label{tab:foo}
  \centering
  \begin{tabular}{|c|c|c|c|} \hline
   &Upper bound & Upper bound & Upper bound\\  & \eqref{equ:Anderson-Duffin's upper bound} & \eqref{eqn:norm upper bound os E with parameters-2} &\eqref{eqn:2nd sharp of norm E with Z---}  \\ \hline
    Numerical value&  0.2752& 0.0511  & 0.0732 \\\hline
    Relative error&  507.5\% & 12.8\% & 61.6\% \\ \hline
  \end{tabular}
\end{table}
}\end{ex}

\begin{ex}{\rm For any $t\in (-\infty, +\infty)$, let $P(t)\in\mathbb{C}^{2\times 2}$ be the orthogonal projection defined by
\begin{equation*}P(t)=\left(
                        \begin{array}{cc}
                          \cos^2(t) & -\sin(t)\cos(t) \\
                          -\sin(t)\cos(t) & \sin^2(t) \\
                        \end{array}
                      \right).
\end{equation*}
Put $$A=P\left(\frac{\pi}{8}\right), B=P\left(\frac{\pi}{6}\right), X=P\left(\frac{\pi}{4}\right), Y=P\left(\frac{\pi}{3}\right),$$  and let
$E, f$ be defined by \eqref{eqn:defn of E} and \eqref{equ:defn of f}, respectively.
Then $\Vert E\Vert=0.4650$ and  $\inf\{t>0|f(t)\}=f(1)=0.5000$. A comparison of the errors is also provided in Table~\ref{tab:2-nd comparison}, which shows that for this example, norm upper bounds  \eqref{eqn:norm upper bound os E with parameters-2} and \eqref{eqn:2nd sharp of norm E with Z---}  are the same.
\begin{table}[htbp]
  \caption{Comparison of the errors associated to norm upper bounds \eqref{equ:Anderson-Duffin's upper bound}, \eqref{eqn:norm upper bound os E with parameters-2} and \eqref{eqn:2nd sharp of norm E with Z---}\label{tab:2-nd comparison}}
  \label{tab:foo--}
  \centering
  \begin{tabular}{|c|c|c|c|} \hline
   &Upper bound & Upper bound & Upper bound\\  & \eqref{equ:Anderson-Duffin's upper bound} & \eqref{eqn:norm upper bound os E with parameters-2} &\eqref{eqn:2nd sharp of norm E with Z---}  \\ \hline
    Numerical value&  3 & 0.5 & 0.5 \\\hline
    Relative error&  545.2\% & 7.5\% & 7.5\% \\ \hline
  \end{tabular}
\end{table}
}\end{ex}
\section{Concluding remarks}As shown in Theorem~\ref{thm:Almost least common upper bound} that for any two Hermitian positive semi-definite matrices $X,Y\in\mathbb{C}^{n\times n}$, a common upper bound of $X$ and $Y$ can be constructed based on certain $C^*$-algebraic technique. This common upper bound is proved to be strictly less than
$X+Y-X:Y$ whenever  $X:Y$ is non-zero. Furthermore, if $X$ and $Y$ are two positive operators acting on a general Hilbert $C^*$-module \cite[Section~2]{Xu-Wei-Gu}, then Theorem~\ref{thm:Almost least common upper bound} still works in the case that $X+Y$ is Moore-Penrose invertible; or equivalently, $X+Y$ has a closed range \cite[Theorem~2.2]{Xu-Sheng}.

As mentioned early, the positivity of the matrix $H$ defined by \eqref{equ:defn of H} can be derived directly from the factorization formula \eqref{equ:key equality concerning perturbation of parallel sum} for $H$. This newly obtained factorization formula can also be extended to the infinite-dimensional case. More precisely,
 if $A,B,X$ and $Y$ are positive operators such that $A+B+X+Y, A+B$ and $X+Y$ are all Moore-Penrose invertible, then the factorization formula \eqref{equ:key equality concerning perturbation of parallel sum} for $H$ is also valid.

As illustrated by Remark~\ref{rem:comparison of upper bound wrt Z} and two numerical examples in Section~\ref{sec:Numerical examples},
the newly obtained upper bounds \eqref{eqn:norm upper bound os E with parameters-2} and \eqref{eqn:2nd sharp of norm E with Z---} are sharper
than the original one established in \cite[Theorem~31]{Anderson-Duffin}. It is not hard to prove that
norm upper bound \eqref{equ:sharp upper bound for parallel sum wrt positive operators} is also true for positive operators $A$ and $B$ if $A+B$ is Moore-Penrose invertible. Thus in the general setting of Hilbert $C^*$-modules, a generalized version of Theorem~\ref{thm:norm upper bounds with parameters} can also be obtained  provided that the associated operators are all Moore-Penrose invertible.

\vspace{2ex}
\noindent\textbf{Acknowledgments}

\noindent The authors thank the referee for helpful suggestions.

\vspace{2ex}


\begin{thebibliography}{10}

\bibitem{Anderson}W.N. Anderson, Jr., Shorted operators, SIAM J. Appl. Math. 20 (1971), 520--525.

\bibitem{Anderson-Duffin}W.N. Anderson, Jr. and R.J. Duffin, Series and parallel addition of matrices, J. Math. Anal. Appl. 26 (1969), 576--594.

\bibitem{Andersont-Morley-Trapp}W.N. Andersont Jr., T.D. Morley, and G.E. Trapp, Ladder networks, fixed points, and
the geometric mean, Circuits Systems Signal Process. 3 (1983), 259--268.

\bibitem{Andersont-Morley-Trapp-2}W.N. Andersont Jr., T.D. Morley, and G.E. Trapp, Infinite networks and quadratic optimal control, Circuits Systems Signal Process. 9 (1990), 229--238.

\bibitem{Anderson-Schreiber}W.N. Anderson, Jr. and M. Schreiber, The infimum of two projections,  Acta Sci. Math. (Szeged) 33 (1972), 165--168.

\bibitem{Anderson-Trapp}W.N. Anderson, Jr. and G. E. Trapp, Shorted operators II, SIAM J. Appl. Math. 28 (1975), 60--71.


\bibitem{Antezana-Corach-Stojanoff}J. Antezana, G. Corach, and D. Stojanoff, Bilateral shorted operators and parallel sums, Linear Algebra
Appl. 2006 (414), 570--588.

\bibitem{Becker-Combettes}S.R. Becker and P.L. Combettes, An algorithm for splitting parallel sums of linearly composed monotone operators, with applications to signal recovery, J. Nonlinear Convex Anal 15 (2014), 137--159.

\bibitem{Berkics}P. Berkics, On parallel sum of matrices, Linear Multilinear
Algebra. 65 (2017), 2114--2123.

\bibitem{Berlinet} A.F. Berlinet, Geometric approach to the parallel sum of vectors and application to the vector $\varepsilon$-algorithm, Numer. Algorithms. 65 (2014), 783--807.

\bibitem{Bot-Hendrich}R.I. Bot and C. Hendrich, A Douglas-Rachford type primal-dual method for solving inclusions with mixtures of composite and parallel-sum type monotone operators, SIAM J. Optim. 23 (2013), 2541--2565.

\bibitem{Butler-Morley}C.A. Butler and T.D. Morley, A note on the shorted operator, SIAM J. Matrix Anal. Appl. 9 (1988), 147--155.


\bibitem{Djikic} M.S. Djiki$\acute{c}$, Extensions of the Fill-Fishkind formula and the infimum-parallel sum relation, Linear Multilinear
Algebra. 64 (2015), 2335--2349.


\bibitem{Fillmore-Williams}P.A. Fillmore and J.P. Williams, On operator ranges, Adv. Math. 7 (1971), 254--281.

\bibitem{Green-Kamen}W.L. Green and E. Kamen, Stabilization of linear systems over a commutative normed
algebra with applications to spatially distributed parameter dependent systems, SIAM J.
Control Optim. 23 (1985), 1--18.


\bibitem{Mitra}S.K. Mitra, Shorted operators and the identification problem, IEEE Trans. Circ. Syst. 29 (1982), 581--583.

\bibitem{Mitra-Odell}S.K. Mitra and P.L. Odell, On parallel summabillty of matrices, Linear  Algebra Appl. 74 (1986), 239--255.

\bibitem{Mitra-Puntanen}S.K. Mitra and S. Puntanen, The shorted operator statistically interpreted, Calcutta Statist. Assoc. Bull. 40 (1990/91), 97--102.

\bibitem{Morley}T.D. Morley, An alternative approach to the parallel sum, Adv. Appl. Math. 10 (1989), 358--369.

\bibitem{Ouellette}D.V. Ouellette, Schur complements and statistics, Linear  Algebra Appl. 36 (1981), 187--295.



\bibitem{pedersen} G.K. Pedersen, $C^*$-algebras and their
automorphism groups, London Math. Soc. Monographs 14, Academic
Press, 1979.

\bibitem{wang-wei-qiao}G. Wang, Y. Wei, and S. Qiao, Generalized inverses:
theory and computations, Science Press, Beijing, 2004.

\bibitem{Xu-Sheng}Q. Xu and L. Sheng, Positive semi-definite matrices of
adjointable  operators on Hilbert $C^*$-modules, Linear Algebra
Appl. 428 (2008), 992--1000.

\bibitem{Xu-Wei-Gu}Q. Xu, Y. Wei, and Y. Gu, Sharp norm-estimations for Moore-Penrose inverses of stable perturbations of
Hilbert $C^*$-module operators, SIAM J. Numer. Anal. 47 (2010), 4735--4758.

\bibitem{Zhong-Welsch}D. Zhong and W. Welsch, The parallel sum of matrices and its application in geodetic adjustments, J. Geod. 71 (1997), 171--175.



\end{thebibliography}
\end{document}